\documentclass{article}
\usepackage{amsmath,amssymb,bm,mathbbol,bbm,amsthm}
\usepackage{latexsym}
\usepackage[dvipdfmx]{graphicx}
\usepackage{subfigure}
\usepackage{multirow}
\usepackage{ascmac}

\allowdisplaybreaks

\newtheorem{theorem}{Theorem}
\newtheorem{corollary}[theorem]{Corollary}
\newtheorem{lemma}[theorem]{Lemma}

\newtheorem{remark}[theorem]{Remark}

\newtheorem{assumption}[theorem]{Assumption}

\newcommand{\ds}{\displaystyle}
\newcommand{\E}{\mathrm{E}}

\newcommand{\M}{\mathcal{M}}
\newcommand{\K}{\mathcal{K}}
\newcommand{\one}{\mbox{$\mathbb 1$}}
\newcommand{\PR}{\mathrm{PR}}
\newcommand{\NP}{\mathrm{NP}}
\newcommand{\on}{\mathrm{on}}

\begin{document}

\begin{center}
{\Large\textbf{Mean Waiting Times in Discrete-Time Priority Queues }}

\medskip

{\Large\textbf{with Geometrically Distributed Idle Periods }}

\bigskip

\bigskip

Tetsuya Takine

\bigskip

Department of Information and Communications Technology, 

Graduate School of Engineering, The University of Osaka, Japan

\end{center}

\medskip

\bigskip

\noindent
\textbf{Abstract } 

\noindent
This paper considers the mean waiting times in discrete-time
preemptive-resume and nonpreemptive priority single-server queues fed
by $K$ independent batch Markovian arrival streams with geometrically
distributed idle periods. While being active, the $k$th
($k=1,2,\ldots,K$) arrival stream feeds at least one customer to the
queue, where the number of arriving customers depends on the state of
the underlying Markov chain. Service times of class $k$ customers are
independent and identically distributed according to a general
distribution. For these queues, we derive explicit formulae for the
mean waiting times of customers in each class.

\bigskip

\noindent
\textbf{Keywords} 

\noindent
discrete-time queue, batch Markovian arrival streams, conservation
law, preemptive-resume priority, nonpreemptive priority, mean waiting
time

\bigskip

\noindent
\textbf{Mathematical Subject Classification} 
60K25, 60K37

\bigskip

\section{Introduction}\label{sec-introduction}

Priority queues have been studied extensively in the literature
\cite{Jais68,Mill60,Taka68,Taka91,Taka93}. See \cite{Walr04} and
references therein also.  When arrivals are memoryless (i.e., the
arrival process has independent and stationary increment properties),
explicit formulae for the mean waiting time can be obtained
\cite{Rubi89,Taka91,Taka93}. Here we use the word ``explicit'' to
imply that there is no need to solve a system of equations or a
fixed-point equation to evaluate the formula, which arise typically in
queueing analysis. When arrivals are correlated in time, however, we
cannot obtain such an explicit formula in general. See
\cite{Kham92,Taki94a,Taki94b,Taki96,Taki99,Walr05} for example.

This paper considers discrete-time priority single-server queues with
a superposition of $K$ independent batch Markovian arrival streams of
customers. Customers from the $k$th arrival stream are referred to as
class $k$ customers. Time is divided into slots of equal length.  Each
arrival stream becomes idle and active alternately, and idle and
active periods form a discrete-time alternating renewal process. The
lengths of idle periods are geometrically distributed and no customers
arrive during idle periods. On the other hand, the lengths of active
periods follow a discrete phase-type distribution, which is equivalent
to the distribution of first passage times to the absorbing state in
an underlying absorbing Markov chain, and at least one customer
arrives at each slot in active periods. Note that 
the number of customers arriving at each slot in active periods
can depend on the state of the underlying absorbing Markov
chain. Service times of customers in each class are independent and
identically distributed (i.i.d.) according to a general distribution.

The main purpose of this paper is the derivation of explicit formulae
for the mean waiting times in discrete-time preemptive-resume and
nonpreemptive priority single-server queues with the above-mentioned
arrival streams. The advantage of discrete-time queues over the
continuous-time counterparts is that even when arrivals are correlated
in time, we may be able to derive explicit formulae for performance
measures of interest. For example, Takine derived the mean 
amount of unfinished work 
in the discrete-time work-conserving single-server queue with
the above-mentioned arrival streams~\cite{Taki05}. See \cite{Neut90}
and \cite{Vite86} also. To the best of our knowledge, however, there
are no reports on an explicit formula for the mean waiting time in
$K$-class priority queues with arrivals correlated in time.

The essential assumption in the model is that (i) arrival streams are
independent of each other, (ii) idle periods in each stream follow a
geometric distribution, and (iii) each stream feeds at least one
customer at each slot in active periods. These assumptions yield an
i.i.d.\ sequence of busy cycle during which customers are served
successively. We first derive the conservation law for mean waiting
times in work-conserving, nonpreemptive-service single-server queues,
using the general formula for the mean amount of unfinished work in
\cite{Taki05}.  Note that the mean waiting times in the nonpreemptive
priority queue satisfy this conservation law.  Next, we consider the
preemptive-resume priority queue and derive an explicit formula for
the mean waiting time. We then consider the relation between mean
waiting times in nonpreemptive and preemptive-resume priority queues
and derive the mean waiting time in the nonpreemptive priority queue
in terms of that in the preemptive-resume priority queue.

The rest of this paper is organized as follows.  Section
\ref{section:model} describes the mathematical model. In
Section~\ref{section:law}, we first establish the conservation law in
work-conserving single-server queues with nonpreemptive services. We
then discuss the first two moments of busy cycle lengths.  In
Section~\ref{sec-preemptive}, we consider the preemptive-resume
priority queue and in Section~\ref{sec-nonpreemptive}, we consider the
nonpreemptive priority queue.  Finally, we provide the results for
some important special cases in Section \ref{section:special}.

Throughout this paper, we use the following conventions.
For any function $f(z)$ of $z$, let $f^{(n)}$ denote the $n$th
derivative of $f(z)$ evaluated at $z=1-$, i.e.,
\[
f^{(n)} = \lim_{z \to 1-} \frac{d^n}{dz^n} f(z).
\]
This convention applies to vector and matrix functions, too. Furthermore,
for any nonnegative integer random variable $F$, let $\tilde{F}$
denote an equilibrium random variable.
\[
\Pr (\tilde{F} = n ) = \frac{\Pr(F > n)}{\E[F]}, 
\quad 
n=0,1,\ldots,
\]
and 
\[
\E[ \tilde{F}] 
=
\frac{\E[F(F-1)]}{2 \E[F]}. 
\]
Note that the forward recurrence time of $F$ follows the equilibrium
distribution of $F$. Finally, the empty sum is defined as zero.

\section{Model}\label{section:model}

We consider discrete-time work-conserving single-server queues with
$K$ independent batch Markovian arrival streams of customers. Let $\K
= \{1,2,\ldots,K\}$. Customers arriving from the $k$th ($k \in \K$)
arrival stream are referred to as class $k$ customers. Let $A_{k,n}$
($k \in \K$, $n=1,2,\ldots$) denote the number of class $k$ customers
arriving at the $n$th slot. The sequence $\{A_{k,n}\}_{n=1,2,\ldots}$
is characterized as follows.  For each $k$ ($k \in \K$), we assume
that there exists an irreducible underlying Markov chain with finite
state space $\M_k =\{0, 1, \ldots, M_k\}$, which governs class $k$
arrivals. To avoid triviality, we assume $M_k \geq 1$. Let $S_{k,n}$
($k \in \K$, $n=0,1,\ldots$) denote the state of the underlying Markov
chain for class $k$ arrivals at the $n$th slot.
\begin{assumption}\label{assumption-1}
$A_{k,n}$ ($k \in \K$, $n=1,2,\ldots$) depends only on $S_{k,n-1}$ and
$S_{k,n}$.
\end{assumption}
\begin{assumption}\label{assumption-2}
The following relationships hold for all $k$ ($k \in \K$) and for all
$n$ ($n=1,2,\ldots$).
\[
S_{k,n} = 0 \, \Leftrightarrow\,  A_{k,n} = 0,
\qquad
S_{k,n} \in \M_k^{[\on]}\,  \Leftrightarrow\,  A_{k,n} \geq 1,
\]
where $\M_k^{[\on]} = \{1,2,\ldots M_k\}$ ($k \in \K$).
\end{assumption}
Accordingly, we assume that the irreducible transition probability
matrix $\bm{P}_k$ ($k \in \K$) of the underlying Markov chain for 
class $k$ is given by
\begin{equation}
\bm{P}_k = \left(\begin{array}{cc}
p_k & \left(1-p_k\right) \bm{\alpha}_k
\\
(\bm{I}_k-\bm{T}_k)\bm{e}_k & \bm{T}_k
\end{array}\right),
\quad
k \in \K,
\label{P_k}
\end{equation}
where $\bm{\alpha}_k$ and $\bm{T}_k$ denote a $1 \times M_k$
probability vector and an $M_k \times M_k$ substochastic matrix,
$\bm{I}_k$ denotes an $M_k \times
M_k$ unit matrix, and $\bm{e}_k$ denotes an $M_k \times 1$
vector whose elements are all equal to one. Because $\bm{P}_k$ is
assumed to be irreducible, $0 \leq p_k < 1$, $(\bm{I}_k-\bm{T}_k)
\bm{e}_k \neq \bf{0}$, and the stochastic matrix $\bm{T}_k +
(\bm{I}_k-\bm{T}_k)\bm{e}_k\bm{\alpha}_k$ is irreducible.

All arriving customers are accommodated in the buffer of infinite
capacity and they are served by a single server. Let $B_{k,n}$ ($k \in
\K$, $n=1,2,\ldots$) denote the amount of class $k$ work arriving at
the $n$th slot. We then have
\[
B_{k,n} = \sum_{\ell=1}^{A_{k,n}} H_{k,n,l}, 
\qquad k \in \K,\  n=1,2,\ldots,
\]
where $H_{k,n,l}$ denotes the service time of the $l$th customer in
class $k$ who arrives at the $n$th slot.  We assume that service times
of class $k$ ($k \in \K$) customers are i.i.d.\ according to a general
distribution. Let $H_k$ ($k \in \K$) denote a generic random variable
for $H_{k,n,l}$.  For simplicity, we assume that $H_k$ is positive.
The probability generating function of $H_k$ ($k \in \K$) is denoted
by $H_k(z)$:
\[
H_k(z) = \E\left[ z^{H_k} \right] 
= \sum_{m=1}^{\infty} \Pr(H_k = m) z^m,
\quad
k \in \K.
\]
Let $X_n$ ($n=0,1,\ldots$) denote the total amount of unfinished work
at the $n$th slot. We assume that $\{X_n\}_{n=0,1,\ldots}$ satisfies
\begin{equation}
X_n = \max(X_{n-1} -1, 0) + B_n, 
\qquad n=1,2,\ldots,
\label{X_n}
\end{equation}
where 
\[
B_n = \sum_{k \in \K} B_{k,n}, 
\qquad n=1,2,\ldots.
\]
Equation (\ref{X_n}) implies that the service discipline is
work-conserving and that $B_n=0$ if $X_n=0$. Because $B_n=0$ is
equivalent to $A_{k,n} = 0$ for all $k$ ($k \in \K$), we have
\begin{equation}
\Pr( S_n=0 \mid X_n=0) 
= \prod_{k \in \K} \Pr( S_{k,n}=0 \mid X_n=0) 
= 1,
\label{Q=S}
\end{equation}
where the first equality follows from the independence of arrival
streams and Assumption \ref{assumption-2}.

Within the above settings, we consider the preemptive-resume and
nonpreemptive priority disciplines, where priority is assigned in a
descending order of class indices. Specifically, class 1 customers
have the highest priority and class $K$ customers the lowest.  In
these priority disciplines, services of class $k$ customers start only
when there are no customers in classes 1 to $k-1$. Once a
service starts in the nonpreemptive priority queue, it continues to
the completion. In the preemptive-resume priority queue, however,
services are interrupted when customers in higher priority classes
arrive, and they restart from the interrupted points again when all
customers in higher priority classes complete their services. The order
of services within each class is arbitrary, provided that it is
independent of service times of customers and once a service starts,
it completes before other services in the same class start.

We define $\bm{A}_k(z)$ ($k \in \K$) as an $(M_k+1) \times (M_k+1)$
matrix whose ($i,j$)th ($i,j \in \M_k$) element $A_{k,i,j}(z)$ is
given by
\[
A_{k,i,j}(z) = P_{k,i,j} a_{k,i,j}(z), 
\]
where $P_{k,i,j}$ denotes the ($i,j$)th element of $\bm{P}_k$ and
\[
a_{k,i,j}(z) = \E\left[z^{A_{k,n}} \mid 
S_{k,n-1} = i, S_{k,n} =j \right].
\]
Let $\alpha_{k,j}$ and $T_{k,i,j}$ ($k \in \K$, $i,j \in
\M_k^{[\on]}$) denote the $j$th element of $\bm{\alpha}_k$ and the
($i,j$)th element of $\bm{T}_k$.  It then follows that
\begin{equation}
\bm{A}_k(z) = \left(\begin{array}{cc}
p_k & (1-p_k) \bm{\alpha}_k(z)
\\
(\bm{I}_k - \bm{T}_k)\bm{e}_k & \bm{T}_k(z)
\end{array}\right), 
\qquad
k \in \K,
\label{A_k(z)}
\end{equation}
where $\bm{\alpha}_k(z)$ denotes a $1 \times M_k$ vector whose $j$th
($j \in \M_k^{[\on]}$)
element $\alpha_{k,j}(z)$ is given by
\[
\alpha_{k,j}(z) = \alpha_{k,j} a_{k,0,j}(z),
\]
and $\bm{T}_k(z)$ denotes an $M_k \times M_k$ matrix whose $(i,j)$th
($i,j \in \M_k^{[\on]}$) element $T_{k,i,j}(z)$ is given by
\[
T_{k,i,j}(z) = T_{k,i,j} a_{k,i,j}(z).
\]
Therefore $\{A_{k,n}\}_{n=1,2,\ldots}$ is characterized fully by
$p_k$, $\bm{\alpha}_k(z)$, and $\bm{T}_k(z)$. We also define
$\bm{B}_k(z)$ ($k \in \K$) as
\[
\bm{B}_k(z) = \bm{A}_k(H_k(z)).
\]
Note that the ($i,j$)th ($i,j \in \M_k$) element $B_{k,i,j}(z)$ of
$\bm{B}_k(z)$ is given by
\[
B_{k,i,j}(z) = \E\left[z^{B_{k,n}} \one(S_{k,n} = j) 
\mid S_{k,n-1} = i \right], 
\]
where $\one(\chi)$ denotes the indicator function of event $\chi$.

Let $\bm{\pi}_k$ ($k \in \K$) denote the steady-state probability
vector of the underlying Markov chain for class $k$. Because the
underlying Markov chain for each class is assumed to be
irreducible, $\bm{\pi}_k$ is determined uniquely by $\bm{\pi}_k =
\bm{\pi}_k \bm{P}_k$ and $\bm{\pi}_k \check{\bm{e}}_k=1$, i.e.,
\[
\bm{\pi}_k = \left( \pi_{k,0},\ \bm{\pi}_{k,1} \right),
\quad
k \in \K,
\]
where $\check{\bm{e}}_k$ ($k \in \K$) denotes an $(M_k+1) \times 1$ vector
whose elements are all equal to one, and 
\[
\pi_{k,0} 
=
\frac{(1-p_k)^{-1}}{(1-p_k)^{-1} + \bm{\alpha}_k 
( \bm{I}_k - \bm{T}_k )^{-1}\bm{e}_k},
\qquad
\bm{\pi}_{k,1} 
=
\frac{\bm{\alpha}_k ( \bm{I}_k-\bm{T}_k )^{-1}}
{(1-p_k)^{-1} + \bm{\alpha}_k 
( \bm{I}_k-\bm{T}_k )^{-1}\bm{e}_k},
\quad
k \in \K.
\]
Let $\lambda_k$ ($k \in \K$) denote the arrival rate of class $k$
customers.
\[
\lambda_k = 
\bm{\pi}_k \bm{A}_k^{(1)} \bm{e}_k
=
\frac{\bm{\alpha}_k^{(1)}\bm{e}_k
+
\bm{\alpha}_k ( \bm{I}_k-\bm{T}_k )^{-1}
\bm{T}_k^{(1)}\bm{e}_k}
{(1-p_k)^{-1} + \bm{\alpha}_k 
( \bm{I}_k-\bm{T}_k )^{-1}\bm{e}_k},
\quad
k \in \K.
\]
We define $\lambda$ as 
\[
\lambda = \sum_{k \in \K} \lambda_k.
\]
Let $\rho_k$ ($k \in \K$) denote the traffic intensity of class $k$
customers.  We then have
\[
\rho_k = \bm{\pi}_k\bm{B}_k^{(1)} \check{\bm{e}}_k
= \lambda_k \E[H_k],
\quad
k \in \K.
\]

\begin{assumption}\label{assumption-3}
\begin{equation}
\rho = \sum_{k \in \K} \rho_k < 1,
\label{rho}
\end{equation}
and the system is in steady state. 
\end{assumption}

\section{Conservation law and busy cycle}\label{section:law}

The priority queues described in Section~\ref{section:model} can be
viewed as special cases of the model studied in \cite{Taki05}, where
the mean amount of unfinished work in the discrete-time
work-conserving single-server queue with multiple, independent arrival
streams with geometrically distributed idle periods is derived. In
what follows, we briefly review the result in \cite{Taki05} and then
establish the conservation law for nonpreemptive services.
Furthermore, we derive explicit expressions for the first two
factorial moments of busy cycle lengths.

\subsection{The amount of unfinished work and the conservation law}

It is clear from Assumption \ref{assumption-1} and (\ref{X_n}) that
the multivariate process $\{(X_n$, $S_{1,n}$, $S_{2,n}$, $\ldots,
S_{K,n})\}_{n=0,1,\ldots}$ forms a discrete-time Markov chain.  We
arrange $(S_{1,n}, S_{2,n},\ldots,S_{K,n})$ in lexicographic order and
it is denoted by $S_n$. Note that $S_n$ takes a value in $\M = \{0, 1,
\ldots, M\}$, where $M= \prod_{k \in \K} (1+M_k) - 1$. It then follows
from (\ref{X_n}) that the bivariate process $\{(X_n, S_n);
n=0,1,\ldots\}$ forms a Markov chain of M/G/1 type \cite{Neut89}. To
see this, we define $\bm{B}^*(z)$ as an $(M+1) \times (M+1)$ matrix
given by
\[
\bm{B}^*(z)
=
\bm{B}_1(z) \otimes \bm{B}_2(z) \otimes \cdots \otimes
\bm{B}_K(z),
\]
where $\otimes$ stands for Kronecker product \cite{Grah81}.  Furthermore,
let $\bm{B}(m)$ ($m=0,1,\ldots$) denote the coefficient matrix of
$z^m$ in $\bm{B}(z)$.
\[
\bm{B}^*(z) = \sum_{m=0}^{\infty} \bm{B}(m) z^m .
\]
Roughly speaking, $\bm{B}(m)$ represents the one-step transition
probability matrix of the product underlying Markov chain when $m$
units of work arrive. Since $\{X_n\}_{n=0,1,\ldots}$ is governed by
(\ref{X_n}), the transition probability matrix of the irreducible
Markov chain $\{(X_n, S_n); n=0,1,\ldots\}$ takes the following form:
\[
\left(
\begin{array}{ccccc}
\bm{B}(0) & \bm{B}(1) & \bm{B}(2) & \bm{B}(3) & \cdots \\
\bm{B}(0) & \bm{B}(1) & \bm{B}(2) & \bm{B}(3) & \cdots \\
\bm{O}    & \bm{B}(0) & \bm{B}(1) & \bm{B}(2) & \cdots \\
\bm{O}    & \bm{O}    & \bm{B}(0) & \bm{B}(1) & \cdots \\
\bm{O}    & \bm{O}    & \bm{O}    & \bm{B}(0) & \cdots \\
\vdots    & \vdots    & \vdots    & \vdots    & \ddots
\end{array}
\right).
\]
Note that the irreducible Markov chain $\{(X_n, S_n)\}_{n=0,1,\ldots}$
is positive recurrent when (\ref{rho}) holds (Theorem 3.2.1 of
\cite{Neut89}). Let $\bm{x}(z)$ denote a $1\times (M+1)$ vector whose
$i$th ($i \in \M$) element $x_i(z)$ represents
\[
x_i(z) = \lim_{n \to \infty} 
\E \left[ z^{X_n} \one(S_n =i) \right].
\]
Note here that owing to Assumption \ref{assumption-2},
$\bm{B}(0)$ takes a form:
\begin{equation}
\bm{B}(0) = \left(
\begin{array}{ccccc}
B_{0,0}(0) & 0 & 0 &  \cdots & 0 \\
B_{1,0}(0) & 0 & 0 &\cdots & 0 \\
B_{2,0}(0) & 0 & 0 & \cdots & 0 \\
\vdots & \vdots & \vdots  & \ddots & \vdots  \\
B_{M,0}(0) & 0 & 0 & \cdots & 0
\end{array}
\right) ,
\label{B(0)}
\end{equation}
and therefore $\bm{x}(0) = (1-\rho) \bm{g}$ \cite{Taki05}, where
$\bm{g}$ denotes a $1 \times (M+1)$ unit vector given by
\begin{equation}
\bm{g} = (1,0,0,\ldots,0).
\label{eq:bm{g}}
\end{equation}
See (\ref{Q=S}) also. It then follows from the standard
matrix-analytic method \cite{Neut89} that
\[
\bm{x}(z) [z \bm{I} - \bm{B}^*(z) ] 
= (1-\rho) (z-1) \bm{g} \bm{B}^*(z),
\]
where $\bm{I}$ denotes an $(M+1)\times(M+1)$ unit matrix.

We define $\delta_k(z)$ ($k \in \K$) as the Perron-Frobenius
eigenvalue of $\bm{A}_k(z)$, and let $\bm{u}_k(z)$ and $\bm{v}_k(z)$
($k \in \K$) denote the left and right eigenvectors of $\bm{A}_k(z)$
associated with $\delta_k(z)$. We assume that $\bm{u}_k(z) \check{\bm{e}}_k =
\bm{u}_k(z) \bm{v}_k(z) = 1$, so that $\bm{u}_k(z)$ and $\bm{v}_k(z)$
are uniquely determined.

\begin{lemma}[Theorem A.2.1 in \cite{Neut89}]\label{lemma-Neut89} 
$\bm{u}_k(1)$ ($k \in \K$) and $\bm{v}_k(1)$ ($k \in \K$) are given by
\begin{equation}
\bm{u}_k(1) = \bm{\pi}_k, 
\qquad
\bm{v}_k(1) = \bm{e}_k,
\quad
k \in \K,
\label{u_k-v_k}
\end{equation}
and $\delta_k^{(1)}$ ($k \in \K$) is given by
\begin{equation}
\delta_k^{(1)} = \lambda_k,
\quad
k \in \K.
\label{delta_k^1}
\end{equation}
\end{lemma}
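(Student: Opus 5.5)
The plan is to get both claims from the observation that $\bm{A}_k(1)=\bm{P}_k$, followed by first-order perturbation of a simple Perron--Frobenius eigenvalue. One notational point first: $\bm{v}_k(1)$ is an $(M_k+1)\times 1$ vector, so the $\bm{e}_k$ in (\ref{u_k-v_k}) should be read as $\check{\bm{e}}_k$. That is the reading the normalization $\bm{u}_k(z)\check{\bm{e}}_k=1$ requires.

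\emph{Eigenvectors at $z=1$.} Every generating function $a_{k,i,j}(z)$ equals $1$ at $z=1$, so $\bm{A}_k(1)=\bm{P}_k$. This matrix is stochastic and irreducible by assumption. By Perron--Frobenius theory its spectral radius is $1$, the eigenvalue $1$ is simple, and the right eigenvector is $\check{\bm{e}}_k$ because $\bm{P}_k\check{\bm{e}}_k=\check{\bm{e}}_k$. The left eigenvector is the unique stationary vector $\bm{\pi}_k$. Hence $\delta_k(1)=1$. The normalizations $\bm{u}_k(1)\check{\bm{e}}_k=1$ and $\bm{u}_k(1)\bm{v}_k(1)=1$ then force $\bm{u}_k(1)=\bm{\pi}_k$ and $\bm{v}_k(1)=\check{\bm{e}}_k$.

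\emph{The derivative.} I would differentiate the identity $\bm{A}_k(z)\bm{v}_k(z)=\delta_k(z)\bm{v}_k(z)$, which gives
\[
\bm{A}_k^{(1)}\bm{v}_k(1)+\bm{A}_k(1)\bm{v}_k^{(1)}=\delta_k^{(1)}\bm{v}_k(1)+\delta_k(1)\bm{v}_k^{(1)}.
\]
Premultiplying by $\bm{u}_k(1)=\bm{\pi}_k$ and using $\bm{\pi}_k\bm{P}_k=\bm{\pi}_k$ cancels the two $\bm{v}_k^{(1)}$ terms. What remains is $\delta_k^{(1)}=\bm{\pi}_k\bm{A}_k^{(1)}\check{\bm{e}}_k$, and this is exactly the definition of $\lambda_k$ given in Section~\ref{section:model}.

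\emph{Differentiability.} The main obstacle is justifying that $\delta_k(z)$ and $\bm{v}_k(z)$ are differentiable as $z\to1-$. Because $\delta_k(1)$ is a simple eigenvalue, analytic perturbation theory (implicit function theorem on the characteristic polynomial) makes $\delta_k(z)$ and suitably normalized eigenvectors analytic near $z=1$ within the domain of analyticity of the entries. If the entries have only one-sided derivatives at $1$, I would assume that the first moments of the arrival batch sizes are finite. Under that assumption the entries are continuously differentiable on $(0,1]$, and the same argument applies with one-sided limits. This is the content of Theorem A.2.1 in \cite{Neut89}, which can be cited for this step.
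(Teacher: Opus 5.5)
Your proposal is correct and matches the argument behind the result: the paper gives no proof of its own beyond citing Theorem A.2.1 of Neuts, and that theorem rests on exactly the identification $\bm{A}_k(1)=\bm{P}_k$, $\bm{u}_k(1)=\bm{\pi}_k$, $\bm{v}_k(1)=\check{\bm{e}}_k$, followed by differentiating the eigenvalue equation and premultiplying by $\bm{\pi}_k$ to obtain $\delta_k^{(1)}=\bm{\pi}_k\bm{A}_k^{(1)}\check{\bm{e}}_k=\lambda_k$. You are also right that $\bm{e}_k$ must be read as $\check{\bm{e}}_k$ here, and likewise in the paper's formula for $\lambda_k$.
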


We define $C_k$ ($k \in \K$) as a generic random variable representing
the length of a randomly chosen active period of the class $k$ arrival
stream, and $\Lambda_k$ ($k \in \K$) as a generic random variable
representing the total number of class $k$ customers arriving in a
randomly chosen active period of class $k$. Furthermore, let
$\Lambda_k(\tilde{C}_k)$ ($k \in \K$) denote a generic random variable
representing the total number of class $k$ customers arriving in the
forward recurrence time of a randomly chosen active period of class
$k$. It then follows from (\ref{P_k}) and
(\ref{A_k(z)}) that for $k \in \K$, 
\begin{align*}
\E\big[z^{C_k} \omega^{\Lambda_k} \big]
&=
z \bm{\alpha}_k(\omega) \big[\bm{I}_k-z\bm{T}_k(\omega)\big]^{-1} 
(\bm{I}_k-\bm{T}_k)\bm{e}_k,
\\
\E\big[z^{\tilde{C}_k}\big]
&=
\frac{1 - \E\big[z^{C_k}\big]}{\E[C_k] (1-z)}
=
\frac{\bm{\alpha}_k (\bm{I}_k-\bm{T}_k)^{-1}}{\E[C_k]}
(\bm{I}_k-z \bm{T}_k)^{-1} (\bm{I}_k-\bm{T}_k)\bm{e}_k,
\\
\E\big[z^{\Lambda_k(\tilde{C}_k)}\big]
&=
\frac{\bm{\alpha}_k (\bm{I}_k-\bm{T}_k)^{-1}}{\E[C_k]}
\big(\bm{I}_k-\bm{T}_k(z)\big)^{-1} 
(\bm{I}_k-\bm{T}_k)\bm{e}_k,
\\
\E\big[z^{\tilde{\Lambda}_k}\big]
&=
\frac{1 - \E\big[z^{\Lambda_k}\big]}{\E[\Lambda_k] (1-z)},
\end{align*}
where 
\[
\E[C_k] 
= 
\bm{\alpha}_k (\bm{I}_k - \bm{T}_k)^{-1} \bm{e}_k,
\qquad
\E[\Lambda_k]
=
\bm{\alpha}_k^{(1)} \bm{e}_k
+
\bm{\alpha}_k (\bm{I}_k - \bm{T}_k)^{-1} \bm{T}_k^{(1)} \bm{e}_k.
\]
We then have for $k \in \K$,
\begin{align*}
\E[C_k \Lambda_k]
&=
\bm{\alpha}_k^{(1)} 
(\bm{I}_k - \bm{T}_k)^{-1} \bm{e}_k
+
\bm{\alpha}_k (\bm{I}_k - \bm{T}_k)^{-1} \bm{T}_k^{(1)} 
(\bm{I}_k - \bm{T}_k)^{-1} \bm{e}_k
+
\bm{\alpha}_k
(\bm{I}_k - \bm{T}_k)^{-2} \bm{T}_k^{(1)} \bm{e}_k,
\\
\E[\tilde{C}_k]
&=
\frac{\bm{\alpha}_k (\bm{I}_k-\bm{T}_k)^{-1}}{\E[C_k]}
(\bm{I}_k - \bm{T}_k)^{-1} \bm{T}_k \bm{e}_k,
\\
\E\big[\Lambda_k(\tilde{C}_k)\big]
&=
\frac{\bm{\alpha}_k (\bm{I}_k-\bm{T}_k)^{-1}}{\E[C_k]}
(\bm{I}_k - \bm{T}_k)^{-1} \bm{T}_k^{(1)} \bm{e}_k,
\\
\E[\tilde{\Lambda}_k]
&=
\frac{1}{2\E[\Lambda_k]}
\Big[
\bm{\alpha}_k^{(2)} \bm{e}_k
+
2 \bm{\alpha}_k^{(1)} (\bm{I}_k - \bm{T}_k)^{-1} 
\bm{T}_k^{(1)} \bm{e}_k
\\
&\qquad\qquad\qquad {} 
+
2 \bm{\alpha}_k (\bm{I}_k - \bm{T}_k)^{-1} \bm{T}_k^{(1)} 
(\bm{I}_k - \bm{T}_k)^{-1} \bm{T}_k^{(1)} \bm{e}_k
+
\bm{\alpha}_k (\bm{I}_k - \bm{T}_k)^{-1} 
\bm{T}_k^{(2)} \bm{e}_k
\Big].
\end{align*}
We define $v_{k,j}(z)$ ($k \in \K$, $j \in \M_k$) as the $j$th
element of $\bm{v}_k(z)$. 

\begin{lemma}[Lemma 1 in \cite{Taki05}]\label{lemma-Taki05}
$\delta_k^{(2)}$ ($k \in \K$) and $v_{k,0}^{(1)}$ ($k \in \K$) are
given by
\begin{eqnarray}
\delta_k^{(2)} &=& 
2 \lambda_k \E[\tilde{\Lambda}_k] 
- 2 \lambda_k \pi_k^{[\on]} 
\frac{\E[ C_k \Lambda_k]}{\E[C_k]}
+ 2 \lambda_k^2 \pi_k^{[\on]} \left(1+\E[\tilde{C}_k] \right),
\label{delta_k^2}
\\
v_{k,0}^{(1)} &=&
\lambda_k \pi_k^{[\on]}  \left(1+\E[\tilde{C}_k] \right) 
- \pi_k^{[\on]} \E[\Lambda_k(\tilde{C}_k)],
\label{v_{k,0}^1}
\end{eqnarray}
where $\pi_k^{[\on]} = \bm{\pi}_{k,1} \bm{e}_k$ ($k \in
\K$) denotes the stationary probability of the class $k$ arrival
stream being active.
\end{lemma}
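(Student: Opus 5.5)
We plan to compute $\delta_k^{(2)}$ and $v_{k,0}^{(1)}$ by differentiating the eigen-relations of $\bm{A}_k(z)$, exploiting its block structure (\ref{A_k(z)}). We fix the normalizations as in the text, $\bm{u}_k(z)\check{\bm{e}}_k=\bm{u}_k(z)\bm{v}_k(z)=1$. By Lemma \ref{lemma-Neut89}, $\bm{v}_k(1)=\check{\bm{e}}_k$, $\bm{u}_k(1)=\bm{\pi}_k$ and $\delta_k^{(1)}=\lambda_k$. Rather than using the general perturbation formulas, we write the right eigenvector equation $\bm{A}_k(z)\bm{v}_k(z)=\delta_k(z)\bm{v}_k(z)$ componentwise. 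We set $\bm{v}_k(z)=(v_{k,0}(z),\bm{w}_k(z))^{\top}$.

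The second block row gives
\[
(\bm{I}_k-\bm{T}_k)\bm{e}_k v_{k,0}(z)+\bm{T}_k(z)\bm{w}_k(z)=\delta_k(z)\bm{w}_k(z),
\]
so that
\[
\bm{w}_k(z)=\big(\delta_k(z)\bm{I}_k-\bm{T}_k(z)\big)^{-1}(\bm{I}_k-\bm{T}_k)\bm{e}_k\, v_{k,0}(z).
\]
The first row gives $p_k v_{k,0}(z)+(1-p_k)\bm{\alpha}_k(z)\bm{w}_k(z)=\delta_k(z)v_{k,0}(z)$. Dividing by $v_{k,0}$, this becomes a scalar characteristic equation
\[
\delta_k(z)-p_k=(1-p_k)\,\bm{\alpha}_k(z)\big(\delta_k(z)\bm{I}_k-\bm{T}_k(z)\big)^{-1}(\bm{I}_k-\bm{T}_k)\bm{e}_k .
\]
The right-hand side is $(1-p_k)\E[\delta^{-C_k}z^{\Lambda_k}]$, evaluated by the given generating function with $z\to\delta^{-1}$ and $\omega\to z$. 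We differentiate this identity twice at $z=1$. Since $\delta_k(1)=1$, the first derivative recovers $\delta_k^{(1)}(1+(1-p_k)\E[C_k])=(1-p_k)\E[\Lambda_k]$. This matches $\lambda_k$, because $\pi_{k,0}^{-1}=1+(1-p_k)\E[C_k]$ and $\pi_k^{[\on]}=1-\pi_{k,0}$. The second derivative involves $\E[C_k(C_k+1)]$, $\E[C_k\Lambda_k]$ and $\E[\Lambda_k(\Lambda_k-1)]$, together with $\delta_k^{(2)}$ and $(\delta_k^{(1)})^2$. We then solve for $\delta_k^{(2)}$ and rewrite the result using $\E[\tilde{\Lambda}_k]=\E[\Lambda_k(\Lambda_k-1)]/(2\E[\Lambda_k])$, $\E[\tilde{C}_k]=\E[C_k(C_k-1)]/(2\E[C_k])$, $\lambda_k=\pi_{k,0}(1-p_k)\E[\Lambda_k]$ and $\pi_k^{[\on]}=\pi_{k,0}(1-p_k)\E[C_k]$. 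The three terms of (\ref{delta_k^2}) should emerge directly: the $C_k\Lambda_k$ cross term gives the middle term, and the $C_k(C_k+1)$ term gives $2\lambda_k^2\pi_k^{[\on]}(1+\E[\tilde{C}_k])$.

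For $v_{k,0}^{(1)}$, the normalization $\bm{u}_k(z)\bm{v}_k(z)=1$ alone does not pin down $\bm{v}_k$, so we differentiate $\bm{u}_k(z)\bm{v}_k(z)=1$ at $z=1$. This gives $\bm{\pi}_k\bm{v}_k^{(1)}+\bm{u}_k^{(1)}\check{\bm{e}}_k=0$, and since $\bm{u}_k(z)\check{\bm{e}}_k\equiv1$, we get $\bm{\pi}_k\bm{v}_k^{(1)}=0$. Differentiating the expression for $\bm{w}_k(z)$ at $z=1$ yields
\[
\bm{w}_k^{(1)}=v_{k,0}^{(1)}\bm{e}_k-(\bm{I}_k-\bm{T}_k)^{-1}\big(\lambda_k\bm{I}_k-\bm{T}_k^{(1)}\big)\bm{e}_k .
\]
Substituting into $\pi_{k,0}v_{k,0}^{(1)}+\bm{\pi}_{k,1}\bm{w}_k^{(1)}=0$ gives
\[
v_{k,0}^{(1)}=\bm{\pi}_{k,1}(\bm{I}_k-\bm{T}_k)^{-1}\big(\lambda_k\bm{I}_k-\bm{T}_k^{(1)}\big)\bm{e}_k .
\]
We identify $\bm{\pi}_{k,1}=\pi_k^{[\on]}\bm{\alpha}_k(\bm{I}_k-\bm{T}_k)^{-1}/\E[C_k]$. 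The formulas for $\E[\tilde{C}_k]$ and $\E[\Lambda_k(\tilde{C}_k)]$ then give $\bm{\pi}_{k,1}(\bm{I}_k-\bm{T}_k)^{-1}\bm{e}_k=\pi_k^{[\on]}(1+\E[\tilde{C}_k])$, using $(\bm{I}-\bm{T})^{-1}=\bm{I}+(\bm{I}-\bm{T})^{-1}\bm{T}$, and $\bm{\pi}_{k,1}(\bm{I}_k-\bm{T}_k)^{-1}\bm{T}_k^{(1)}\bm{e}_k=\pi_k^{[\on]}\E[\Lambda_k(\tilde{C}_k)]$. This is exactly (\ref{v_{k,0}^1}).

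The main obstacle is the bookkeeping in the second-derivative step for $\delta_k^{(2)}$. We need to expand $\E[\delta^{-C}z^{\Lambda}]$ to second order, noting that $(\delta^{-1})^{(2)}=2\lambda_k^2-\delta_k^{(2)}$. We then need to collect the $\delta_k^{(2)}$ terms, whose total coefficient is $1+(1-p_k)\E[C_k]=\pi_{k,0}^{-1}$, and convert the factorial moments into the equilibrium quantities without sign errors. We would check the resulting formula against the trivial case $M_k=1$ with Bernoulli active slots.
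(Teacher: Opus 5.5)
Your proposal is correct. Carried out as you describe, it gives $\pi_{k,0}^{-1}\delta_k^{(2)}=(1-p_k)\bigl(\lambda_k^2\E[C_k(C_k+1)]-2\lambda_k\E[C_k\Lambda_k]+\E[\Lambda_k(\Lambda_k-1)]\bigr)$, which converts term by term into (\ref{delta_k^2}) using the identities you list. Your expression $v_{k,0}^{(1)}=\bm{\pi}_{k,1}(\bm{I}_k-\bm{T}_k)^{-1}(\lambda_k\bm{I}_k-\bm{T}_k^{(1)})\bm{e}_k$ likewise reduces exactly to (\ref{v_{k,0}^1}).

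The paper gives no proof of this lemma; it quotes the result from \cite{Taki05}. So your argument is a self-contained derivation, not a reconstruction of something in the text. The generic perturbation route for a simple Perron--Frobenius eigenvalue would first solve $\bm{v}_k^{(1)}-\bm{P}_k\bm{v}_k^{(1)}=\bm{A}_k^{(1)}\check{\bm{e}}_k-\lambda_k\check{\bm{e}}_k$ with $\bm{\pi}_k\bm{v}_k^{(1)}=0$, and then use $\delta_k^{(2)}=\bm{\pi}_k\bm{A}_k^{(2)}\check{\bm{e}}_k+2\bm{\pi}_k\bm{A}_k^{(1)}\bm{v}_k^{(1)}$.

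Your route instead exploits the regenerative idle state. The block form collapses the eigenproblem to the scalar equation $\delta_k(z)-p_k=(1-p_k)\E[\delta_k(z)^{-C_k}z^{\Lambda_k}]$. This has two payoffs:
\begin{itemize}
\item $\delta_k^{(2)}$ follows by implicit differentiation without ever computing $\bm{v}_k^{(1)}$.
\item $\E[\tilde{C}_k]$, $\E[\tilde{\Lambda}_k]$ and $\E[C_k\Lambda_k]/\E[C_k]$ appear directly as factorial moments of $(C_k,\Lambda_k)$, rather than having to be recognized inside products of $(\bm{I}_k-\bm{T}_k)^{-1}$.
\end{itemize}
The generic route, by contrast, applies to any irreducible $\bm{A}_k(z)$ and yields $v_{k,0}^{(1)}$ along the way, but it needs noticeably more matrix bookkeeping to reach the paper's form.

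Two small points are worth stating explicitly. First, dividing by $v_{k,0}(z)$ and inverting $\delta_k(z)\bm{I}_k-\bm{T}_k(z)$ are legitimate near $z=1$, because $v_{k,0}(1)=1$ and $\bm{I}_k-\bm{T}_k$ is nonsingular. Second, $\delta_k(z)<1$ for $z<1$, so $\delta_k(z)^{-1}>1$. It is therefore cleaner to differentiate the matrix expression $\bm{\alpha}_k(z)(\delta\bm{I}_k-\bm{T}_k(z))^{-1}(\bm{I}_k-\bm{T}_k)\bm{e}_k$ directly, since it is analytic in $\delta$ near $1$. That avoids relying on the probabilistic reading of $\E[\delta^{-C_k}z^{\Lambda_k}]$ with an argument exceeding one.
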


We define $\bm{u}(z)$, $\bm{v}(z)$, and $\delta(z)$ as
\begin{eqnarray*}
\bm{u}(z) &=& \bm{u}_1(H_1(z)) \otimes \bm{u}_2(H_2(z)) 
\otimes \cdots \otimes \bm{u}_K(H_K(z)),
\\
\bm{v}(z) &=& \bm{v}_1(H_1(z)) \otimes \bm{v}_2(H_2(z)) 
\otimes \cdots \otimes \bm{v}_K(H_K(z)),
\\
\delta(z) &=& \prod_{k \in \K} \delta_k(H_k(z)),
\end{eqnarray*}
respectively. Note here that
\[
\bm{u}(z)\bm{B}^*(z) = \delta(z) \bm{u}(z),
\qquad
\bm{B}^*(z) \bm{v}(z) = \delta(z)\bm{v}(z),
\qquad
\bm{u}(z) \bm{v}(z) = \bm{u}(z) \bm{e} = 1 ,
\]
\[
\bm{u}(1) = \bm{\pi}_1 \otimes  \bm{\pi}_2  \otimes  
\cdots  \otimes \bm{\pi}_K ,
\qquad 
\bm{v}(1) = \bm{e}, 
\qquad 
\delta(1) = 1  ,
\]
and
\begin{equation}
\delta^{(1)} = \sum_{k \in \K} \lambda_k \E[H_k]
=\rho,
\qquad
\delta^{(2)}
=
\sum_{k \in \K} \left(
2 \rho_k \E[\tilde{H}_k]
+ \rho_k (\rho - \rho_k) 
+ \E[H_k]^2 \delta_k^{(2)} 
\right) ,
\label{delta-moment}
\end{equation}
where $\bm{e}$ denotes an $(M+1)\times 1$ vector whose elements are
all equal to one.

Let $U_k$ ($k \in \K$) denote a generic random variable representing
the total amount of unfinished work of class $k$ in steady state, and
$U$ denote a generic random variable representing the stationary
amount of unfinished work, i.e.,
\[
U = \sum_{k \in \K} U_k.
\]
Note that $U$ is considered as a generic random variable for $\{X_n;\
n=0,1, \ldots\}$ in steady state. Applying Eq.\ (11) in \cite{Taki05} to
our priority queues, we immediately obtain the following lemma.
\begin{lemma}[Section 2.2 in \cite{Taki05}]\label{lemma-unfinished}
In any work-conserving single-server queue 
satisfying Assumptions~\ref{assumption-1} to \ref{assumption-3}, 
the mean amount $\E[U]$ of unfinished work in steady state is given by
\begin{equation}
\E[U] =
\rho 
+ \frac{\ds\sum_{k \in \K} \rho_k \E[\tilde{H}_k]}{1-\rho} 
+ \frac{\ds\sum_{k \in \K} \rho_k(\rho -\rho_k)}{2(1-\rho)} 
+ \frac{\ds\sum_{k \in \K} \E[H_k]^2 \delta_k^{(2)}}{2(1-\rho)} 
+ \sum_{k \in \K} \E[H_k] v_{k,0}^{(1)} ,
\label{eq:lemma-unfinished}
\end{equation}
where $\delta_k^{(2)}$ and $v_{k,0}^{(1)}$ are given 
by (\ref{delta_k^2}) and (\ref{v_{k,0}^1}). 
\end{lemma}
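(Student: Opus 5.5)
The plan is to derive (\ref{eq:lemma-unfinished}) from the vector generating function identity
\[
\bm{x}(z)\,[z\bm{I}-\bm{B}^*(z)] = (1-\rho)(z-1)\,\bm{g}\,\bm{B}^*(z),
\]
using the Perron--Frobenius eigen-structure defined just above. Since $\E[U]=\bm{x}^{(1)}\bm{e}$, we need the first derivative of $\bm{x}(z)\bm{e}$ at $z=1$.

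\textbf{Step 1 (project onto the right eigenvector).} Postmultiply the identity by $\bm{v}(z)$. Because $\bm{B}^*(z)\bm{v}(z)=\delta(z)\bm{v}(z)$, this gives the scalar equation
\[
\bm{x}(z)\bm{v}(z)\,[z-\delta(z)] = (1-\rho)(z-1)\,\delta(z)\,\bm{g}\bm{v}(z).
\]

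\textbf{Step 2 (expand around $z=1$).} Write $z-\delta(z)=(1-\rho)(z-1) - \tfrac12\delta^{(2)}(z-1)^2+\cdots$. Divide by $(z-1)$ and differentiate once at $z=1$, using $\bm{x}(1)\bm{v}(1)=1$ and $\bm{g}\bm{v}(1)=1$. This yields
\[
(1-\rho)\,\big(\bm{x}\bm{v}\big)^{(1)} - \tfrac12\delta^{(2)} = (1-\rho)\big(\rho + (\bm{g}\bm{v})^{(1)}\big),
\]
so $(\bm{x}\bm{v})^{(1)} = \rho + (\bm{g}\bm{v})^{(1)} + \delta^{(2)}/[2(1-\rho)]$. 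Substituting (\ref{delta-moment}) for $\delta^{(2)}$ produces the second through fourth terms of (\ref{eq:lemma-unfinished}).

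\textbf{Step 3 (boundary term).} Since $\bm{g}$ selects the all-idle state $0$ in every component of the Kronecker product, $\bm{g}\bm{v}(z)=\prod_k v_{k,0}(H_k(z))$. Differentiating at $z=1$ gives
\[
(\bm{g}\bm{v})^{(1)}=\sum_k \E[H_k]\,v_{k,0}^{(1)},
\]
using $v_{k,0}(1)=1$ from Lemma~\ref{lemma-Neut89}.

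\textbf{Step 4 (the correction term).} We have $\bm{x}^{(1)}\bm{e} = (\bm{x}\bm{v})^{(1)} - \bm{x}(1)\bm{v}^{(1)}$, and $\bm{x}(1)=\bm{u}(1)=\bm{\pi}_1\otimes\cdots\otimes\bm{\pi}_K$. The normalization $\bm{u}(z)\bm{v}(z)=1$ differentiates to $\bm{u}^{(1)}\bm{e}+\bm{\pi}\bm{v}^{(1)}=0$. Together with $\bm{u}(z)\bm{e}=1$, which gives $\bm{u}^{(1)}\bm{e}=0$, this yields $\bm{\pi}\bm{v}^{(1)}=0$, so the correction term vanishes. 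This normalization bookkeeping is the step that needs care: the paper imposes both $\bm{u}_k\check{\bm{e}}_k=1$ and $\bm{u}_k\bm{v}_k=1$, and this choice is exactly what kills the correction term. Collecting Steps 2--4 gives (\ref{eq:lemma-unfinished}).

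As an alternative, one can note that this is precisely Eq.\ (11) of \cite{Taki05}. The only additional check is that the present model satisfies that paper's hypotheses: independent streams, geometric idle periods, $\bm{x}(0)=(1-\rho)\bm{g}$ via (\ref{B(0)}) and (\ref{Q=S}), and per-class work $\bm{B}_k(z)=\bm{A}_k(H_k(z))$. Under that route, the chain-rule identities $\delta_k(H_k(z))^{(1)}=\lambda_k\E[H_k]$ and the corresponding second-derivative formula give (\ref{delta-moment}), and the same substitution finishes the proof.
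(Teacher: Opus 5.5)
Your argument is correct, but it takes a different route from the paper's. The paper gives no derivation: it simply applies Eq.~(11) of \cite{Taki05} to the present model, which is exactly your closing ``alternative''. You instead rederive the formula from $\bm{x}(z)[z\bm{I}-\bm{B}^*(z)]=(1-\rho)(z-1)\bm{g}\bm{B}^*(z)$ by projecting onto $\bm{v}(z)$, and each of your steps checks out:
\begin{itemize}
\item the expansion of $z-\delta(z)$ gives $(\bm{x}\bm{v})^{(1)}=\rho+(\bm{g}\bm{v})^{(1)}+\delta^{(2)}/[2(1-\rho)]$;
\item the boundary term reduces to $\sum_k\E[H_k]v_{k,0}^{(1)}$ because $\bm{g}\bm{v}(z)=\prod_k v_{k,0}(H_k(z))$;
\item the double normalization $\bm{u}_k(z)\check{\bm{e}}_k=\bm{u}_k(z)\bm{v}_k(z)=1$ gives $\bm{\pi}_k\bm{v}_k^{(1)}=0$, hence $\E[U]=(\bm{x}\bm{v})^{(1)}$.
\end{itemize}

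Your route buys self-containedness, and it makes visible two things the citation hides. First, the product-form boundary term comes from $\bm{x}(0)=(1-\rho)\bm{g}$, i.e.\ from (\ref{B(0)}) and (\ref{Q=S}). Second, under a different eigenvector normalization the formula would carry $v_{k,0}^{(1)}-\bm{\pi}_k\bm{v}_k^{(1)}$ in place of $v_{k,0}^{(1)}$. So the expression (\ref{v_{k,0}^1}) is tied to the normalization fixed before Lemma~\ref{lemma-Taki05}.

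The citation buys brevity and inherits the technical justification from \cite{Taki05}. To make your version fully rigorous, you should add two points. One is that the left derivatives at $z=1-$ you use exist and $\E[U]<\infty$; this needs finite second moments of $H_k$ and of the batch sizes, plus simplicity of the Perron--Frobenius eigenvalue of the irreducible $\bm{A}_k(z)$ near $z=1$. The other is that $\bm{x}(1)=\bm{\pi}_1\otimes\cdots\otimes\bm{\pi}_K$ follows from the independence and stationarity of the arrival streams, rather than from the functional equation at $z=1$ alone.
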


\begin{theorem}\label{theorem-conservation}
In any work-conserving single-server queue with nonpreemptive
services, satisfying Assumptions~\ref{assumption-1} to
\ref{assumption-3}, the mean waiting time $\E[W_k]$ ($k \in \K$) of
class $k$ customers in steady state satisfies
\begin{eqnarray*}
\sum_{k \in \K} \rho_k \E[W_k] 
&=&
\frac{\rho\ds\sum_{k \in \K} \rho_k \E[\tilde{H}_k]}{1-\rho} 
+ \frac{\ds\sum_{k \in \K} \rho_k(\rho -\rho_k)}{2(1-\rho)} 
+ \frac{\ds\sum_{k \in \K} 
\rho_k \E[H_k] \left( 
\E[\tilde{\Lambda}_k] 
- 
\pi_k^{[\on]} \ds\frac{\E[C_k \Lambda_k ]}{\E[C_k]}
\right)}{1-\rho} 
\nonumber
\\
&& 
\qquad {}
+ 
\sum_{k \in \K} \pi_k^{[\on]} \rho_k 
\left( 1 + \frac{\rho_k}{1-\rho} \right)
\left(1 + \E[\tilde{C}_k] \right)
-
\sum_{k \in \K} \pi_k^{[\on]} \E[H_k] \E[\Lambda_k(\tilde{C}_k)].
\end{eqnarray*}
\end{theorem}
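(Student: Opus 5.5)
The plan is to express $\E[U]$ in terms of the mean waiting times by a level-crossing (PASTA-free) decomposition of the unfinished work under nonpreemptive service, and then equate this with Lemma~\ref{lemma-unfinished}. In a nonpreemptive work-conserving queue, the unfinished work observed at the end of a slot (the slot boundary at which $X_n$ is recorded by (\ref{X_n})) splits into three parts. The first is the total work of customers waiting in the queue. The second is the remaining work of the customer in service. The third is the work that has just arrived in the current slot and is included in $X_n$.

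For the first part, I would use the time-average form of Little's law applied to work rather than to customers. A class $k$ customer brings $H_k$ units of work, which is independent of its waiting time $W_k$ because the order of service within a class is independent of service times and service is nonpreemptive. That customer therefore contributes on average $\E[H_k]\E[W_k]$ to the time-integral of waiting work. The time-average waiting work is then $\sum_k \lambda_k \E[H_k]\E[W_k] = \sum_k \rho_k \E[W_k]$. The second part, the residual work of the customer in service, has time-average $\sum_k \rho_k \E[\tilde H_k]$ plus the correction term coming from the discrete-time convention for when a slot's work is counted. I expect this to combine with the remaining pieces into $\rho + \sum_k\rho_k\E[\tilde H_k]$. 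One has to be careful about whether the slot of arrival counts as waiting, and I would fix this by checking the $K=1$ geometric (Bernoulli) case against the known discrete M/G/1 formula.

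Equating the decomposition with (\ref{eq:lemma-unfinished}) gives
\[
\sum_{k}\rho_k\E[W_k] = \E[U] - \rho - \sum_k \rho_k\E[\tilde H_k].
\]
I then substitute (\ref{delta_k^2}) and (\ref{v_{k,0}^1}) into this expression. The term $\sum_k\rho_k\E[\tilde H_k]/(1-\rho)-\sum_k\rho_k\E[\tilde H_k]$ becomes $\rho\sum_k\rho_k\E[\tilde H_k]/(1-\rho)$. The term $\E[H_k]^2\delta_k^{(2)}/(2(1-\rho))$ yields $\rho_k\E[H_k](\E[\tilde\Lambda_k]-\pi_k^{[\on]}\E[C_k\Lambda_k]/\E[C_k])/(1-\rho)$ together with $\pi_k^{[\on]}\rho_k^2(1+\E[\tilde C_k])/(1-\rho)$. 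The term $\E[H_k]v_{k,0}^{(1)}$ yields $\pi_k^{[\on]}\rho_k(1+\E[\tilde C_k])-\pi_k^{[\on]}\E[H_k]\E[\Lambda_k(\tilde C_k)]$. Collecting the $(1+\E[\tilde C_k])$ terms gives the factor $\pi_k^{[\on]}\rho_k(1+\rho_k/(1-\rho))$, which matches the statement exactly.

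The main obstacle is the first step: justifying the work decomposition rigorously in discrete time. This means pinning down exactly which portion of $X_n$ is ``waiting work'', so that the constant terms $\rho+\sum_k\rho_k\E[\tilde H_k]$ come out precisely right, and arguing the independence of $W_k$ and $H_k$. For the independence, I would use nonpreemptiveness and the assumption that intra-class ordering is independent of service times, via a sample-path (renewal-reward over the i.i.d. busy cycles) argument.
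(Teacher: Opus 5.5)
Your proposal is correct and is essentially the paper's proof. The paper writes $\E[U]=\sum_k\{\E[H_k]\E[L_k^q]+\rho_k(1+\E[\tilde{H}_k])\}$ and applies Little's law $\E[L_k^q]=\lambda_k\E[W_k]$ to get $\sum_k\rho_k\E[W_k]=\E[U]-\rho-\sum_k\rho_k\E[\tilde{H}_k]$. It then substitutes Lemmas~\ref{lemma-Taki05} and~\ref{lemma-unfinished}, exactly as in your algebra.

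The constant you flagged needs no calibration against a special case. Take the in-service customer's remaining work to include the unit about to be served. Then each class-$k$ customer contributes $H_kW_k+H_k(H_k+1)/2$ to $\sum_n X_n$, which gives $\rho_k\E[W_k]+\rho_k(1+\E[\tilde{H}_k])$ directly. Your separate ``just-arrived'' part is only a re-bookkeeping of this same per-customer total.
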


\begin{proof}
Let $L_k^q$ ($k \in \K$) and $W_k$ ($k \in \K$) denote generic random
variables representing the number of waiting customers of class $k$
and the waiting time of a randomly chosen class $k$ customer in steady
state. Recall that services are nonpreemptive and the order of
services within each class is independent of service times. It then
follows from Little's law that
\[
\E[U] 
= 
\sum_{k \in \K} \E[U_k]
=
\sum_{k \in \K} 
\left\{ \E[H_k] \E[L_k^q] + \rho_k (\E[\tilde{H}_k]+1) \right\}
=
\sum_{k \in \K} 
\left\{ \rho_k \E[W_k] +\rho_k (\E[\tilde{H}_k] + 1) \right\},
\]
and therefore we have
\begin{equation}
\sum_{k \in \K} \rho_k \E[W_k] 
=
\E[U]-  \sum_{k \in \K} \rho_k (\E[\tilde{H}_k]+1)
=
\E[U]-  \rho - \sum_{k \in \K} \rho_k \E[\tilde{H}_k].
\label{work-conserving-setup}
\end{equation}
The theorem now follows from Lemmas \ref{lemma-Taki05} and 
\ref{lemma-unfinished}, and (\ref{work-conserving-setup}).
\end{proof}

\begin{remark}
For $K=1$, Lemma \ref{lemma-unfinished} and 
Theorem \ref{theorem-conservation} are reduced to
\begin{align}
\E[U_1] 
&=
\rho_1
+ 
\frac{\rho_1 \E[H_1]}{1-\rho_1} 
\left( 
1 
+
\E[\tilde{\Lambda}_1] 
- 
\pi_1^{[\on]} \ds\frac{\E[C_1 \Lambda_1 ]}{\E[C_1]}
\right)
+ 
\rho_1 \pi_1^{[\on]} \left(
\frac{1 + \E[\tilde{C}_1]}{1-\rho_1} 
-
\frac{\E[\Lambda_1(\tilde{C}_1)]}{\lambda_1} \right),
\label{eq:K=1-U}
\\
\E[W_1] 
&=
\frac{\E[H_1]}{1-\rho_1} 
\left( 
\rho_1 
+
\E[\tilde{\Lambda}_1] 
- 
\pi_1^{[\on]} \ds\frac{\E[C_1 \Lambda_1 ]}{\E[C_1]}
\right)
+
\pi_1^{[\on]} \left(
\frac{1 + \E[\tilde{C}_1]}{1-\rho_1} 
-
\frac{\E[\Lambda_1(\tilde{C}_1)]}{\lambda_1} \right).
\label{K=1}
\end{align}
\end{remark}

\subsection{Moments of busy cycle lengths}\label{sec-busy}

We consider the length of a randomly chosen busy cycle in the
work-conserving single-server queue satisfying Assumptions
\ref{assumption-1} to \ref{assumption-3}, 
where a busy cycle is defined as the first recurrence time to the empty
system.  Let $F$ denote the first passage time to the empty system.
\[
F = \inf \{n \geq 1;\ X_n = 0\} .
\]
The probability generating function for the lengths of busy cycles in
steady state is then given by $\E[z^F \mid (X_0,S_0)=(0,0)]$ (see
(\ref{Q=S})).  Note that the busy cycle length is equal to one if no
arrivals happen at the first slot, and otherwise it is constituted of
one slot, at which the system is empty, and successive services of
customers.  As we will see, $F$ plays an important role in the
analysis.

Let $\bm{G}(z)$ denote an $(M+1)\times (M+1)$ matrix whose $(i,j)$th
($i,j \in \M$) element represents $\E[z^F \one(S_F=j) \mid
  (X_0,S_0)=(1,i)]$. It then follows from the standard matrix-analytic
method~\cite{Neut89} that
\begin{equation}
\bm{G}(z)
= 
z \sum_{m=0}^{\infty} \bm{B}(m) \bm{G}^m(z).
\label{ch4:4}
\end{equation}
Because $\E[z^F \one(S_F=j) \mid (X_0,S_0)=(0,i)] =
\E[z^F \one(S_F=j) \mid (X_0,S_0)=(1,i)]$, we have
\[
\E[z^F \mid (X_0,S_0)=(0,0)] = \bm{g} \bm{G}(z) \bm{e},
\]
where $\bm{g}$ is given by (\ref{eq:bm{g}}).  On the other hand, owing
to (\ref{B(0)}) (or Assumption \ref{assumption-2}), $\bm{G}(z)$ takes
the following form:
\begin{equation}
\bm{G}(z) = \left(
\begin{array}{ccccc}
f_0(z) & 0 & 0 &  \cdots & 0 \\
f_1(z) & 0 & 0 &\cdots & 0 \\
f_2(z) & 0 & 0 & \cdots & 0 \\
\vdots & \vdots & \vdots  & \ddots & \vdots  \\
f_M(z) & 0 & 0 & \cdots & 0
\end{array}
\right) .
\label{G(z)-form}
\end{equation}
Thus the probability generating function for the lengths of  
busy cycles in steady state is given by $f_0(z)$. 

\begin{theorem}\label{theorem-f}
In any work-conserving single-server queue satisfying
Assumptions~\ref{assumption-1} to \ref{assumption-3}, the first two
factorial moments, $f_0^{(1)}$ and $f_0^{(2)}$, of the lengths of busy
cycles in steady state are given by
\begin{eqnarray*}
f_0^{(1)} 
&=& \frac{1}{1-\rho} ,
\\
f_0^{(2)}
&=&
\frac{2 \rho}{(1-\rho)^2}
+ 
\frac{1}{(1-\rho)^3} \sum_{k \in K} \left(
2 \rho_k \E[\tilde{H}_k] 
+ \rho_k (\rho - \rho_k) 
+ \E[H_k]^2 \delta_k^{(2)} 
\right),
\end{eqnarray*}
where $\delta_k^{(2)}$ is given in (\ref{delta_k^2}).
\end{theorem}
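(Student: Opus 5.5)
The plan is to exploit the rank-one structure (\ref{G(z)-form}) to turn the matrix equation (\ref{ch4:4}) into a vector equation for $\bm{f}(z)=\bm{G}(z)\bm{e}=(f_0(z),\ldots,f_M(z))^{\top}$, and then differentiate at $z=1$.

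\textbf{Reduction to a vector equation.} Since $\bm{G}(z)=\bm{f}(z)\bm{g}$ and $\bm{g}\bm{f}(z)=f_0(z)$, we have $\bm{G}^m(z)=f_0(z)^{m-1}\bm{f}(z)\bm{g}$ for $m\ge 1$. Multiplying (\ref{ch4:4}) on the right by $\bm{e}$ then gives
\begin{equation*}
\bm{f}(z)=z\Big[\bm{B}(0)\bm{e}+\sum_{m=1}^{\infty}f_0(z)^{m-1}\bm{B}(m)\bm{f}(z)\Big].
\end{equation*}
By (\ref{B(0)}) the matrix $\bm{B}(0)$ has nonzero entries only in column $0$, so $\bm{B}(0)\bm{f}(z)=f_0(z)\bm{B}(0)\bm{e}$. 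Hence the equation can also be written as $z\bm{B}(0)\bm{e}+z\,f_0(z)^{-1}\bigl[\bm{B}^*(f_0(z))-\bm{B}(0)\bigr]\bm{f}(z)$, which expresses everything through $\bm{B}^*$ evaluated at $f_0(z)$. Positive recurrence gives $\bm{f}(1)=\bm{e}$ and $f_0(1)=1$.

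\textbf{First moment.} Differentiate once at $z=1$ and left-multiply by $\bm{u}(1)=\bm{\pi}_1\otimes\cdots\otimes\bm{\pi}_K$, using $\bm{u}(1)\bm{B}^*(1)=\bm{u}(1)$. The term $\bm{u}(1)\bm{f}^{(1)}$ then cancels except for the piece $\bm{u}(1)\bm{B}(0)\bm{f}^{(1)}=\beta f_0^{(1)}$, where $\beta=\bm{u}(1)\bm{B}(0)\bm{e}$. Using $\sum_{m\ge1}(m-1)\bm{u}(1)\bm{B}(m)\bm{e}=\delta^{(1)}-1+\beta=\rho-1+\beta$, I expect to obtain
\[
0=1-\beta f_0^{(1)}+f_0^{(1)}(\rho-1+\beta),
\]
so $f_0^{(1)}=1/(1-\rho)$. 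The unknown $\beta$ drops out. As a consistency check, this also matches $1/\Pr(X=0)$ from $\bm{x}(0)=(1-\rho)\bm{g}$.

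\textbf{Second moment.} The main obstacle is that differentiating twice and projecting with $\bm{u}(1)$ leaves cross terms of the form $\bm{u}(1)\bm{B}^{*(1)}\bm{f}^{(1)}$, which require the vector $\bm{f}^{(1)}$ itself and not only its projection. I would handle this with the eigen-structure rather than a deviation matrix. Rewrite the functional equation as $\bigl[\bm{B}^*(f_0(z))-\bm{B}(0)\bigr]\bm{f}(z)=\cdots$ and left-multiply by $\bm{u}(f_0(z))$, the left Perron eigenvector at the argument $f_0(z)$, so that $\bm{u}(f_0)\bm{B}^*(f_0)=\delta(f_0)\bm{u}(f_0)$. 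This yields a \emph{scalar} identity
\[
f_0\,\bm{u}(f_0)\bm{f}=z\,\bm{u}(f_0)\bm{B}(0)\bm{e}\,(f_0-\delta(f_0))+z\,\delta(f_0)\,\bm{u}(f_0)\bm{f},
\]
again using $\bm{B}(0)\bm{f}=f_0\bm{B}(0)\bm{e}$. The remaining unknowns are $\bm{u}(f_0)\bm{f}$ and $\bm{u}(f_0)\bm{B}(0)\bm{e}$.

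To eliminate them, I would use that $\bm{B}(0)=\bm{b}\bm{g}$ is rank one and that every passage to level $0$ enters phase $0$. This suggests a companion identity relating $\bm{u}(w)\bm{f}$ to $\bm{u}(w)\bm{b}$, obtained from the same equation with the right eigenvector $\bm{v}$. Its derivatives bring in $v_{k,0}^{(1)}$, and these should cancel in the end, consistent with the absence of $v_{k,0}^{(1)}$ in the claimed formula.

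Expanding the scalar identity to second order in $z-1$ and inserting $\delta^{(1)}=\rho$, $\delta^{(2)}$ from (\ref{delta-moment}), and $f_0^{(1)}=1/(1-\rho)$, I expect
\[
f_0^{(2)}(1-\rho)=2\rho f_0^{(1)}+\delta^{(2)}\bigl(f_0^{(1)}\bigr)^2 .
\]
This gives exactly
\[
f_0^{(2)}=\frac{2\rho}{(1-\rho)^2}+\frac{\delta^{(2)}}{(1-\rho)^3},
\]
which is the stated result once $\delta^{(2)}$ is expanded via (\ref{delta-moment}). The bookkeeping that makes the eigenvector-derivative terms cancel is the step I expect to be delicate.
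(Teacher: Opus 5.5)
Your reduction to the vector equation and your first-moment computation are correct, and projecting with $\bm{u}(f_0(z))$ is the right idea. The second-moment part, however, does not go through as written, because the scalar identity you state is false.

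You already have both ingredients: $\bm{f}=z\bm{B}(0)\bm{e}+z f_0^{-1}[\bm{B}^*(f_0)-\bm{B}(0)]\bm{f}$ and $\bm{B}(0)\bm{f}=f_0\bm{B}(0)\bm{e}$. When you combine them, the two $\bm{B}(0)$ terms cancel identically, and you get $[f_0(z)\bm{I}-z\bm{B}^*(f_0(z))]\bm{f}(z)=\bm{0}$. Pre-multiplying by $\bm{u}(f_0(z))$ then gives $[f_0(z)-z\delta(f_0(z))]\,\bm{u}(f_0(z))\bm{f}(z)=0$, with no term in $\bm{u}(f_0)\bm{B}(0)\bm{e}$ at all.

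Your extra term $z\,\bm{u}(f_0)\bm{B}(0)\bm{e}\,(f_0-\delta(f_0))$ is an algebra slip. It is exactly what one gets by replacing $\bm{u}(f_0)\bm{B}(0)\bm{f}$ with $\delta(f_0)\bm{u}(f_0)\bm{B}(0)\bm{e}$ instead of $f_0\bm{u}(f_0)\bm{B}(0)\bm{e}$. It is not harmless: differentiating your identity once at $z=1$ gives $f_0^{(1)}=1/\{(1-\rho)(1-\beta)\}$ with $\beta=\prod_{k\in\K}\pi_{k,0}>0$. That contradicts the first moment you had just obtained correctly.

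The subsequent ``companion identity'' via $\bm{v}$, with $v_{k,0}^{(1)}$ expected to cancel, tries to eliminate an unknown that exists only because of this slip. It is also never actually specified. So the relation $f_0^{(2)}(1-\rho)=2\rho f_0^{(1)}+\delta^{(2)}(f_0^{(1)})^2$ is asserted rather than derived.

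The missing step is short. Since $\bm{u}(f_0(z))\bm{f}(z)\to\bm{u}(1)\bm{e}=1$ as $z\to1-$, you can divide that factor out near $z=1$. This leaves the scalar Tak\'acs-type equation $f_0(z)=z\,\delta(f_0(z))$. Differentiate it once and twice at $z=1-$, using $f_0(1)=\delta(1)=1$ and $\delta^{(1)}=\rho$. You get $f_0^{(1)}=1+\rho f_0^{(1)}$ and $f_0^{(2)}=2\rho f_0^{(1)}+\delta^{(2)}(f_0^{(1)})^2+\rho f_0^{(2)}$. Inserting the expression for $\delta^{(2)}$ from (\ref{delta-moment}) yields both formulas at once. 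No $\beta$, no vector $\bm{f}^{(1)}$, and no eigenvector derivatives are needed; this is exactly the paper's argument.
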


\begin{remark}
It is known that $\bm{g}\bm{G}^{(1)}\bm{e}= (1-\rho)^{-1}$ in general
Markov chains of M/G/1 type (see eq.(3.1.14) in \cite{Neut89}).
\end{remark}

\begin{proof}
We define $\bm{f}(z)$ as an $(M+1) \times 1$ vector whose $j$th ($j
\in \M$) element is given by $f_j(z)$ in (\ref{G(z)-form}).  We then
have
\[
\bm{G}(z)= \bm{f}(z) \bm{g}, \qquad \bm{g} \bm{f}(z) = f_0(z),
\]
where $\bm{g}$ is given by (\ref{eq:bm{g}}). 
Substituting the aboves into (\ref{ch4:4}), we obtain
\begin{equation}
\bm{f}(z) \bm{g} 
=
z \bm{B}(0)
+ z \sum_{m=1}^{\infty} \bm{B}(m) \bm{f}(z) 
[f_0(z)]^{m-1} \bm{g} .
\label{f(z)g}
\end{equation}
Furthermore, post-multiplying both sides of (\ref{f(z)g}) by $\bm{e}$
yields
\begin{eqnarray*}
\bm{f}(z) &=& 
z \bm{B}(0) \bm{e}
+ z \sum_{m=1}^{\infty} \bm{B}(m)
[f_0(z)]^{m-1} \bm{f}(z) 
\\
&=&  
z \bm{B}(0) \bm{e}
+ \frac{z}{f_0(z)} 
( \bm{B}^*(f_0(z)) - \bm{B}(0) ) 
\bm{f}(z) ,
\end{eqnarray*}
and therefore we have 
\begin{equation}
[ f_0(z) \bm{I} - z  \bm{B}^*(f_0(z)) ] \bm{f}(z) 
% =
% z f_0(z) \bm{B}(0) \bm{e} - z \bm{B}(0) \bm{f}(z) 
= {\bf 0},
\label{ch4:6}
\end{equation}
where we use $\bm{B}(0) \bm{f}(z) = f_0(z) \bm{B}(0) \bm{e}$,
which comes from (\ref{B(0)}).

Pre-multiplying both sides of (\ref{ch4:6}) by $\bm{u}(f_0(z))$
yields
\begin{equation}
[f_0(z) - z \delta(f_0(z)) ] 
\bm{u}( f_0(z) ) \bm{f}(z)
= 0.
\label{ch4:9}
\end{equation}
Differentiating both sides of (\ref{ch4:9}) and taking the
limit $z \to 1-$, we obtain
\[
f_0^{(1)} - 1  - \delta^{(1)} f_0^{(1)} = 0,
\qquad 
f_0^{(2)} - 2 \delta^{(1)} f_0^{(1)}
- \delta^{(2)} (f_0^{(1)})^2 
- \delta^{(1)} f_0^{(2)} 
= 0, 
\]
from which it follows that
\begin{equation}
f_0^{(1)} 
= \frac{1}{1-\delta^{(1)}},
\qquad
f_0^{(2)}
=
\frac{2 \delta^{(1)} f_0^{(1)} 
+ 
\delta^{(2)}(f_0^{(1)})^2} {1-\delta^{(1)}}  .
\label{f-1-2}
\end{equation}
Substituting (\ref{delta-moment}) into (\ref{f-1-2}) completes
the proof.
\end{proof}

For later use, we provide the following corollary, which is an
immediate consequence of Theorem \ref{theorem-f}.  Consider a
stationary work-conserving single-server queue fed only by class 1 to
class $k-1$ ($k \in \K$) arrival streams, satisfying
Assumptions~\ref{assumption-1} to \ref{assumption-3}. We define
$f_{k-1,0}^{(1)}$ and $f_{k-1,0}^{(2)}$ as the first and second
factorial moments of the lengths of busy cycles in this queue.  Let
\[
\K_k^+ = \{1,2,\ldots,k\},
\qquad
\K_k^- = \{k,k+1,\ldots,K\},
\quad
k \in \K,
\]
and we define $\rho_k^+$ ($k \in \K$) as
\[
\rho_k^+ = \sum_{m \in \K_k^+} \rho_m.
\]
For the sake of convenience, let $\rho_0^+ = 0$.

\begin{corollary}\label{corollary-f}
$f_{k-1,0}^{(1)}$ and
$f_{k-1,0}^{(2)}$ ($k \in \K_2^-$) are given by
\begin{align}
f_{k-1,0}^{(1)} 
&= 
\frac{1}{1-\rho_{k-1}^+},
\label{eq:f_k-1,0^(1)} 
\\
f_{k-1,0}^{(2)}
&=
\frac{2 \rho_{k-1}^+}{(1-\rho_{k-1}^+)^2}
+ 
\frac{1}{(1-\rho_{k-1}^+)^3} 
\sum_{\ell \in \K_{k-1}^+} 
\left(
2 \rho_{\ell} \E[\tilde{H}_{\ell}] 
+ \rho_{\ell} (\rho_{k-1}^+ - \rho_{\ell}) 
+ \E[H_{\ell}]^2 \delta_{\ell}^{(2)} 
\right),
\nonumber
\end{align}
where $\delta_k^{(2)}$ is given in (\ref{delta_k^2}).
\end{corollary}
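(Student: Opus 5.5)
The plan is to apply Theorem~\ref{theorem-f} directly to an auxiliary system and check that its hypotheses hold. For fixed $k \in \K_2^-$, consider the discrete-time work-conserving single-server queue whose input consists only of the class $1$ to class $k-1$ arrival streams, with the same $\bm{P}_\ell$, $\bm{A}_\ell(z)$ and $H_\ell(z)$ ($\ell \in \K_{k-1}^+$). This is exactly the model of Section~\ref{section:model} with $\K$ replaced by $\K_{k-1}^+$.

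First, verify the assumptions. Assumptions~\ref{assumption-1} and \ref{assumption-2} are properties of each individual stream, so they carry over unchanged. Assumption~\ref{assumption-3} holds because $\rho_{k-1}^+ \le \rho < 1$, and we take the reduced system in its own steady state. The irreducibility of each $\bm{P}_\ell$ is also preserved. The Kronecker product $\bm{B}_1(z)\otimes\cdots\otimes\bm{B}_{k-1}(z)$ then defines an M/G/1-type chain with the same structure, including the zero pattern (\ref{B(0)}) of its level-zero block. Hence the derivation leading to (\ref{G(z)-form}) and the proof of Theorem~\ref{theorem-f} apply verbatim.

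Second, identify the quantities in the reduced system. Its Perron--Frobenius eigenvalue is $\prod_{\ell\in\K_{k-1}^+}\delta_\ell(H_\ell(z))$. The per-class quantities $\lambda_\ell$, $\rho_\ell$, $\delta_\ell^{(2)}$, $\E[\tilde H_\ell]$ depend only on stream $\ell$, so they coincide with those of the full model, and Lemmas~\ref{lemma-Neut89} and \ref{lemma-Taki05} give the same values. Consequently (\ref{delta-moment}) holds with $\rho$ replaced by $\rho_{k-1}^+$ and the sum restricted to $\K_{k-1}^+$. Substituting these into (\ref{f-1-2}) yields the stated formulae for $f_{k-1,0}^{(1)}$ and $f_{k-1,0}^{(2)}$.

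The main point needing care is the second step: the cross term $\rho_\ell(\rho-\rho_\ell)$ in $\delta^{(2)}$ comes from products of first derivatives of the other factors. In the reduced product it therefore becomes $\rho_\ell(\rho_{k-1}^+-\rho_\ell)$. This follows from differentiating the product $\prod_{\ell}\delta_\ell(H_\ell(z))$ twice at $z=1$, using $\delta_\ell(1)=1$ and $\delta_\ell^{(1)}=\lambda_\ell$.
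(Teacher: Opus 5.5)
Your proposal is correct and matches the paper's argument. The paper obtains the corollary as an immediate consequence of Theorem~\ref{theorem-f}, applied to the stationary queue fed only by the class $1$ to class $k-1$ streams, so that $\delta^{(1)}$ becomes $\rho_{k-1}^+$ and the sum in (\ref{delta-moment}) runs over $\K_{k-1}^+$ with cross term $\rho_\ell(\rho_{k-1}^+-\rho_\ell)$; your check of the hypotheses and of that cross term spells out what the paper leaves implicit.
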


\section{The preemptive-resume priority queue}
\label{sec-preemptive}

In this section, we consider the mean waiting time $\E[W_k^{\PR}]$ ($k
\in K$) of class $k$ customers in the preemptive-resume priority queue
in steady state.  Owing to the preemptive-resume priority,
$\E[W_k^{\PR}]$ ($k \in \K_{K-1}^+$) is independent of lower
classes. In particular, the mean waiting time of class 1 customers is
identical to the mean waiting time in the case of $K=1$. Therefore,
$\E[W_1^{\PR}]$ is identical to $\E[W_1]$ in (\ref{K=1}).  In what
follows we consider class $k$ ($k \in \K_2^-$).

We define $H_k^{\PR}$ ($k \in K_2^-$) as a random variable
representing the length of the interval from the beginning of a class
$k$ service to its completion, which we call the service completion
time of class $k$. 

\begin{lemma}\label{lemma:mean_completion}
The mean service completion time of class $k$
($k \in \K_2^-$) is given by
\begin{equation}
\E[H_k^{\PR}] 
=
\frac{\E[H_k]-\rho_{k-1}^+}{1-\rho_{k-1}^+},
\quad
k \in \K_2^-.
\label{eq:E[H_k^PR]}
\end{equation}
\end{lemma}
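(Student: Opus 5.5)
The plan is to decompose the service completion time of a class $k$ customer into slots in which that customer is actually served, separated by busy cycles of the subsystem made up of classes $1$ to $k-1$. Corollary~\ref{corollary-f} then supplies the mean length of each such cycle.

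Under the preemptive-resume discipline, the tagged class $k$ customer receives one unit of service in a slot exactly when the amount of class $1$ to $k-1$ work present in that slot is zero. Let $X_n^{(k-1)}$ denote the amount of unfinished work of classes $1,\ldots,k-1$. It evolves autonomously according to (\ref{X_n}) with $B_n$ replaced by $\sum_{\ell\in\K_{k-1}^+}B_{\ell,n}$, because lower classes never affect higher-priority work. Suppose $X_n^{(k-1)}=0$. Then no class $1,\ldots,k-1$ arrivals occur at slot $n$, so by Assumption~\ref{assumption-2} we have $S_{\ell,n}=0$ for all $\ell\in\K_{k-1}^+$. This is the analogue of (\ref{Q=S}) for the subsystem. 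Hence every slot with zero higher-priority work leaves the process $(X^{(k-1)}_n, S_{1,n},\ldots,S_{k-1,n})$ in the same state $(0,0,\ldots,0)$. By the Markov property and the independence of the arrival streams, the times between successive such slots are i.i.d. copies of the busy cycle length $F_{k-1}$ of the queue fed only by classes $1$ to $k-1$. Its mean is $f^{(1)}_{k-1,0}=1/(1-\rho_{k-1}^+)$ by Corollary~\ref{corollary-f}. These times are also independent of the tagged customer's service requirement $H_k$, since service times are independent of arrivals.

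A class $k$ service begins at a slot where the higher-priority work is zero, and this slot provides the first unit of service. The customer then needs $H_k-1$ further slots with zero higher-priority work, and the service completes at the last of them. Therefore
\[
H_k^{\PR} = 1 + \sum_{i=1}^{H_k-1} F_{k-1,i},
\]
where the $F_{k-1,i}$ are i.i.d. busy cycle lengths independent of $H_k$. Wald's identity then gives
\[
\E[H_k^{\PR}] = 1 + \frac{\E[H_k]-1}{1-\rho_{k-1}^+} = \frac{\E[H_k]-\rho_{k-1}^+}{1-\rho_{k-1}^+},
\]
which is (\ref{eq:E[H_k^PR]}).

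The main obstacle I expect is the regeneration step. One must verify carefully, using the slot convention in (\ref{X_n}) in which arrivals at slot $n$ are counted in $X_n$ and one unit is removed per slot, that the gap between consecutive served slots is exactly $F$ started from $(X_0,S_0)=(0,0)$ for the subsystem. Equivalently, one must check that the first slot of the class $k$ service coincides with a zero-higher-work slot whose higher-priority phases are all $0$. If that holds, the rest is immediate.
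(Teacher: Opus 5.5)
Your proposal is correct and follows essentially the same route as the paper: the decomposition $H_k^{\PR} = 1 + \sum_{i=1}^{H_k-1} F_{k-1,i}$ into i.i.d.\ busy cycles of the class $1$ to $k-1$ subsystem, independent of $H_k$, followed by Corollary~\ref{corollary-f} and Wald's identity. The regeneration step you flag as the main obstacle is already settled by your own observation that $X_n^{(k-1)}=0$ forces $S_{\ell,n}=0$ for all $\ell\in\K_{k-1}^+$ via Assumption~\ref{assumption-2}, a point the paper uses without comment.
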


\begin{proof}
Note that if $H_k=\ell$ ($\ell=1,2,\ldots$), at most $\ell-1$ service
interruptions can occur. See the upper panel in
Fig.\ \ref{figure:completion}.  Let $\{F_{k-1,m}\}_{m=1,2,\ldots}$
denote a sequence of i.i.d.\ random variables with whose distribution
is given by the distribution of the busy cycle in the system only with
class 1 to class $k-1$. It then follows that
\[
H_k^{\PR} 
=
\sum_{m=1}^{H_k-1} F_{k-1,m} + 1,
\quad
k=2,3,\ldots,K.
\]
Note here that (cf.\ Corollary \ref{corollary-f})
\[
\E[F_{k-1.m}] 
=
f_{k-1,0}^{(1)}
=
\frac{1}{1-\rho_{k-1}^+},
\quad 
k=2,3,\ldots,K,\ m=1,2,\ldots.
\]
We thus have
\[
\E[H_k^{\PR}] 
=
\E\bigg[ \sum_{m=1}^{H_k-1} F_{k-1,m} \bigg] + 1
=
\frac{\E[H_k]-1}{1-\rho_{k-1}^+} + 1,
\quad
k = 2,3,\ldots,K,
\]
from which the lemma follows.
\end{proof}

\begin{figure}
\centering
\begin{picture}(110,21)(0,-6)
\put(5,0){\line(1,0){100}}
\put(10,10){\line(1,0){90}}
\put(10,0){\line(0,1){10}}
\put(20,0){\line(0,1){10}}
\put(40,0){\line(0,1){10}}
\put(50,0){\line(0,1){10}}
\put(60,0){\line(0,1){10}}
\put(70,0){\line(0,1){10}}
\put(90,0){\line(0,1){10}}
\put(100,0){\line(0,1){10}}
\put(25,13){\makebox(0,0){$F_{k-1,1}$}}
\put(31,13){\vector(1,0){9}}
\put(19,13){\vector(-1,0){9}}
\put(10,11){\line(0,1){4}}
\put(40,11){\line(0,1){4}}
\multiput(10,2)(0,2){4}{\line(1,0){10}}
\put(50,13){\makebox(0,0){$F_{k-1,2}$}}
\put(56,13){\vector(1,0){4}}
\put(44,13){\vector(-1,0){4}}
\put(60,11){\line(0,1){4}}
\multiput(40,2)(0,2){4}{\line(1,0){10}}
\put(75,13){\makebox(0,0){$F_{k-1,3}$}}
\put(81,13){\vector(1,0){9}}
\put(69,13){\vector(-1,0){9}}
\put(90,11){\line(0,1){4}}
\multiput(60,2)(0,2){4}{\line(1,0){10}}
\put(95,13){\makebox(0,0){$1$}}
\put(97,13){\vector(1,0){3}}
\put(93,13){\vector(-1,0){3}}
\put(100,11){\line(0,1){4}}
\put(55,-4){\makebox(0,0){$H_k^{\PR}$}}
\put(60,-4){\vector(1,0){40}}
\put(50,-4){\vector(-1,0){40}}
\put(10,-6){\line(0,1){4}}
\put(100,-6){\line(0,1){4}}
\multiput(90,2)(0,2){4}{\line(1,0){10}}
\end{picture}

\begin{picture}(110,56)(0,-9)
\put(5,0){\line(1,0){100}}
\put(10,0){\line(0,1){40}}
\put(10,40){\line(1,0){10}}
\put(20,30){\line(0,1){10}}
\put(20,30){\line(1,0){30}}
\put(50,20){\line(0,1){10}}
\put(50,20){\line(1,0){20}}
\put(70,10){\line(0,1){10}}
\put(70,10){\line(1,0){30}}
\put(100,0){\line(0,1){10}}
\multiput(20,0)(0,4){9}{\line(0,1){2}}
\multiput(50,0)(0,4){6}{\line(0,1){2}}
\multiput(70,0)(0,4){3}{\line(0,1){2}}
\put(2,20){\makebox(0,0){$H_k=4$}}
\put(3,23){\vector(0,1){17}}
\put(3,17){\vector(0,-1){17}}
\put(1,0){\line(1,0){4}}
\put(1,40){\line(1,0){4}}
\put(15,43){\makebox(0,0){1}}
\put(17,43){\vector(1,0){3}}
\put(13,43){\vector(-1,0){3}}
\put(10,41){\line(0,1){4}}
\put(20,41){\line(0,1){4}}
\multiput(10,30)(0,2){5}{\line(1,0){10}}
\put(35,33){\makebox(0,0){$F_{k-1,1}'$}}
\put(41,33){\vector(1,0){9}}
\put(29,33){\vector(-1,0){9}}
\put(50,31){\line(0,1){4}}
\multiput(40,20)(0,2){5}{\line(1,0){10}}
\put(40,20){\line(0,1){10}}
\put(60,23){\makebox(0,0){$F_{k-1,2}'$}}
\put(66,23){\vector(1,0){4}}
\put(54,23){\vector(-1,0){4}}
\put(70,21){\line(0,1){4}}
\multiput(60,10)(0,2){5}{\line(1,0){10}}
\put(60,10){\line(0,1){10}}
\put(85,13){\makebox(0,0){$F_{k-1,3}'$}}
\put(91,13){\vector(1,0){9}}
\put(79,13){\vector(-1,0){9}}
\put(100,11){\line(0,1){4}}
\multiput(90,0)(0,2){5}{\line(1,0){10}}
\put(90,0){\line(0,1){10}}
\put(0,-6){\makebox(0,0){$\tilde{R}_k^{\PR}$}}
\put(15,-6){\makebox(0,0){4}}
\put(35,-6){\makebox(0,0){3}}
\put(60,-6){\makebox(0,0){2}}
\put(85,-6){\makebox(0,0){1}}
\qbezier[18](10,-9)(10,-5)(10,0)
\qbezier[18](20,-9)(20,-5)(20,0)
\qbezier[18](50,-9)(50,-5)(50,0)
\qbezier[18](70,-9)(70,-5)(70,0)
\qbezier[18](100,-9)(100,-5)(100,0)
\end{picture}
\caption{The service completion time $H_k^{\PR}$ and 
the remaining service time $\tilde{R}_k^{\PR}$ 
($H_k=4$).}
\label{figure:completion}
\end{figure}
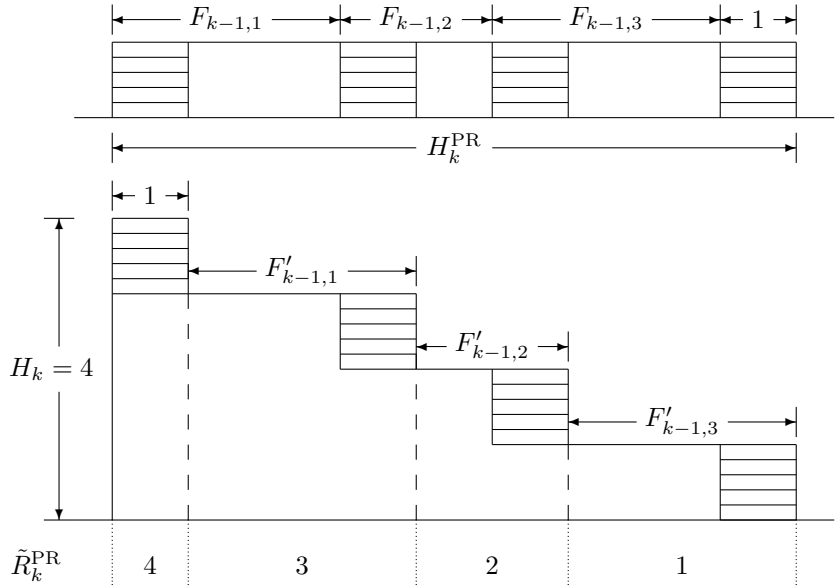

We define $\tilde{R}_k^{\PR}$ as the conditional remaining service
time (including one unit being served if any) of class $k$ given that
the service started but it has not yet completed.  See the lower panel
of Fig.\ \ref{figure:completion}.

\begin{lemma}\label{lemma:tilde{R}}
The mean remaining service time $\E\big[\tilde{R}_k^{\PR}\big]$ is
given by
\[
\E\big[\tilde{R}_k^{\PR}\big]
=
\frac{\E[H_k]}{\E\big[H_k^{\PR}\big]}
\bigg( 1 + \frac{\E\big[\tilde{H}_k\big]}{1-\rho_{k-1}^+} \bigg),
\quad
k \in \K_2^-.
\]
\end{lemma}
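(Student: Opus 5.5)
The plan is a renewal–reward (length-biasing) argument over class $k$ service completion times, using the decomposition from the proof of Lemma~\ref{lemma:mean_completion}. Fix a class $k$ customer with service time $H_k$. As in that proof, write
\[
H_k^{\PR} = 1 + \sum_{m=1}^{H_k-1} F_{k-1,m}.
\]
Here the $F_{k-1,m}$ are i.i.d.\ busy cycles of the system fed only by classes $1$ to $k-1$, and they are independent of $H_k$. Following the lower panel of Fig.~\ref{figure:completion}, I read this as a partition of the completion time into $H_k$ consecutive blocks. The first block is the single slot in which the service starts, and during it the remaining service is $H_k$. The $m$th subsequent block ($m=1,\ldots,H_k-1$) is $F_{k-1,m}$, and during it the remaining service (including the unit in progress, if any) equals $H_k-m$.

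Next I interpret $\tilde{R}_k^{\PR}$ as the remaining service observed at a randomly chosen slot during which some class $k$ service has started but not completed. Service completion periods of successive class $k$ customers form an i.i.d.\ sequence, so the renewal–reward theorem applies. The time-average of the remaining service equals the expected accumulated "reward" over one completion period divided by its mean length. This gives
\[
\E\big[\tilde{R}_k^{\PR}\big]
= \frac{1}{\E[H_k^{\PR}]}\,
\E\Big[ H_k + \sum_{m=1}^{H_k-1} (H_k-m) F_{k-1,m} \Big].
\]
Justifying this step carefully is the main point requiring care. One must check that sampling a slot in which a class $k$ service is in progress is equivalent to length-biased sampling of completion periods. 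One must also check that the $F_{k-1,m}$ retain their unconditional busy-cycle distribution. The second fact should follow from the independence of the arrival streams, the geometric idle periods, and the fact that each interruption begins from an empty higher-priority system, exactly as already used in Lemma~\ref{lemma:mean_completion}.

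The rest is computation. Condition on $H_k$ and use independence together with $\E[F_{k-1,m}] = f_{k-1,0}^{(1)} = 1/(1-\rho_{k-1}^+)$ from Corollary~\ref{corollary-f}. This gives
\[
\E\Big[\sum_{m=1}^{H_k-1}(H_k-m)F_{k-1,m}\Big]
= \frac{\E[H_k(H_k-1)/2]}{1-\rho_{k-1}^+}
= \frac{\E[H_k]\,\E[\tilde{H}_k]}{1-\rho_{k-1}^+},
\]
where the last equality uses the definition $\E[\tilde{H}_k]=\E[H_k(H_k-1)]/(2\E[H_k])$. Adding the leading term $\E[H_k]$ and factoring out $\E[H_k]$ yields
\[
\E[H_k]\Big(1+\frac{\E[\tilde{H}_k]}{1-\rho_{k-1}^+}\Big).
\]
Dividing by $\E[H_k^{\PR}]$ gives the claimed formula. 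Lemma~\ref{lemma:mean_completion} then supplies $\E[H_k^{\PR}]$ explicitly if it is wanted.
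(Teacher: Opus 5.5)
Your proposal is correct and follows essentially the paper's argument. You use the same block decomposition of the completion time (the lower panel of Fig.~\ref{figure:completion}), with i.i.d.\ higher-priority busy cycles of mean $1/(1-\rho_{k-1}^+)$ that are independent of $H_k$, sampled in a length-biased way over class $k$ completion periods. The only difference is that the paper first derives the full pmf of $\tilde{R}_k^{\PR}$ by the inspection paradox and then sums it, whereas you compute the mean directly as the expected accumulated reward over a completion period divided by $\E[H_k^{\PR}]$.
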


\begin{proof}
The standard inspection paradox argument yields 
(cf.\ Fig.\ \ref{figure:completion})
\[
\Pr\big(\tilde{R}_k^{\PR} = m\big)
=
\frac{\Pr(H_k=m)}{\E\big[H_k^{\PR}\big]}
+
\frac{\Pr(H_k > m)}{\big(1-\rho_{k-1}^+\big) \E\big[H_k^{\PR}\big]},
\quad
k \in \K_2^-,\ m=1,2,\ldots.
\]
It then follows that
\begin{align*}
\E\big[\tilde{R}_k^{\PR}\big]
&=
\frac{\E[H_k]}{\E\big[H_k^{\PR}\big]}
+
\frac{1}{\big(1-\rho_{k-1}^+\big) \E\big[H_k^{\PR}\big]}
\sum_{m=1}^{\infty} m \sum_{n=m+1}^{\infty} \Pr(H_k =n)
\\
&=
\frac{\E[H_k]}{\E\big[H_k^{\PR}\big]}
+
\frac{1}{\big(1-\rho_{k-1}^+\big) \E\big[H_k^{\PR}\big]}
\sum_{n=1}^{\infty} \Pr(H_k =n) \sum_{m=1}^{n-1} m
\\
&=
\frac{\E[H_k]}{\E\big[H_k^{\PR}\big]}
+
\frac{1}{\big(1-\rho_{k-1}^+\big) \E\big[H_k^{\PR}\big]}
\sum_{n=1}^{\infty} \frac{n(n-1)}{2} \Pr(H_k =n)
\\
&=
\frac{\E[H_k]}{\E\big[H_k^{\PR}\big]}
+
\frac{1}{\big(1-\rho_{k-1}^+\big) \E\big[H_k^{\PR}\big]}
\E[H_k] \E\big[\tilde{H}_k\big],
\quad
k \in \K_2^-,
\end{align*}
from which the lemma follows.
\end{proof}

We define $W_k^{\PR}$ and $U_k^{\PR}$ ($k \in \K$) as the
waiting time of a randomly chosen class $k$ customer and the amount of
unfinished work of class $k$ in steady state.  We also define
$Q_k^{\PR}$ ($k \in \K_2^-$) as the number of class $k$ customers who
have not yet started their services. Owing to the Little's law, a
class $k$ customer is in the middle of service with probability
$\lambda_k \E\big[H_k^{\PR}\big]$ in steady state and the contribution
of this customer to $U_k^{\PR}$ is given by $\tilde{R}_k^{\PR}$. It
then follows from Lemmas \ref{lemma:mean_completion} and
\ref{lemma:tilde{R}} that
\begin{align}
\E\big[U_k^{\PR}\big] 
&= 
\E[H_k] \E[Q_k^{\PR}]
+ 
\lambda_k \E[H_k^{\PR}] \E\big[\tilde{R}_k^{\PR}\big]
\nonumber
\\
&=
\rho_k \E\left[W_k^{\PR}\right]
+
\rho_k \bigg( 1 + \frac{\E[\tilde{H}_k]}{1-\rho_{k-1}^+} \bigg),
\quad
k \in \K_2^-,
\label{eq:U_k-W_k-PR}
\end{align}
where we use the Little's law $\E[Q_k^{\PR}] = \lambda_k \E[W_k^{\PR}]$ 
in the second equality.

\begin{lemma}\label{lemma:E[U_k^PR]}
The mean amount $\E[U_k^{\PR}]$ ($k \in \K$) of unfinished work
of class $k$ in the stationary preemptive-resume priority queue 
satisfying Assumptions~\ref{assumption-1} to \ref{assumption-3}
is given by
\begin{align}
\E[U_k^{\PR}] 
&=
\rho_k
+ 
\frac{\rho_k \ds\sum_{\ell \in \K_k^+} \rho_{\ell} \E\big[\tilde{H}_{\ell}\big]}
{(1-\rho_k^+)(1-\rho_{k-1}^+)}
+
\frac{\rho_k \E[\tilde{H}_k]}{1-\rho_{k-1}^+}
+ 
\ds\frac{\rho_k \ds\sum_{\ell \in \K_k^+} \rho_{\ell}(\rho_k^+ -\rho_{\ell})}
{2(1-\rho_k^+)(1-\rho_{k-1}^+)}
\nonumber
\\
&\qquad\qquad {}
+ 
\frac{\rho_k \rho_{k-1}^+}{1-\rho_{k-1}^+}
+ 
\frac{\rho_k \ds\sum_{\ell \in \K_k^+} \E[H_{\ell}]^2 \delta_{\ell}^{(2)}}
{2(1-\rho_k^+)(1-\rho_{k-1}^+)}
+ 
\frac{\E[H_k]^2 \delta_k^{(2)}}{2(1-\rho_{k-1}^+)}
+ 
v_{k,0}^{(1)} \E[H_k],
\quad
k \in \K,
\label{eq:lemma:E[U_k^PR]}
\end{align}
where $\rho_0^+=0$, and $\delta_k^{(2)}$ and $v_{k,0}^{(1)}$ are given
by (\ref{delta_k^2}) and (\ref{v_{k,0}^1}).
\end{lemma}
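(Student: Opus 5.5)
The plan is to get $\E[U_k^{\PR}]$ as the difference $\E[U_k^{\PR}] = \E[U^{(k)}] - \E[U^{(k-1)}]$. Here $U^{(k)}$ is the total unfinished work of the stationary work-conserving queue fed only by the arrival streams of classes $1,\ldots,k$, with $U^{(0)}=0$.

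This identity holds under preemptive-resume priority. Classes $1,\ldots,k$ never see work of classes $k+1,\ldots,K$. Hence the total work of classes $1$ to $k$ evolves exactly as $X_n$ in (\ref{X_n}) with only those streams present, so $\sum_{\ell\in\K_k^+}U_\ell^{\PR}$ has the law of $U^{(k)}$. Each such reduced queue still satisfies Assumptions \ref{assumption-1} to \ref{assumption-3}, with $\rho_k^+<1$. Lemma \ref{lemma-unfinished} therefore applies with $\K$ replaced by $\K_k^+$ and $\rho$ by $\rho_k^+$.

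Writing $S_k = \sum_{\ell\in\K_k^+}\big(2\rho_\ell\E[\tilde H_\ell] + \rho_\ell(\rho_k^+-\rho_\ell) + \E[H_\ell]^2\delta_\ell^{(2)}\big)$, Lemma \ref{lemma-unfinished} gives
\[
\E[U^{(k)}] = \rho_k^+ + \frac{S_k}{2(1-\rho_k^+)} + \sum_{\ell\in\K_k^+}\E[H_\ell]v_{\ell,0}^{(1)}.
\]
In the difference, the linear terms give $\rho_k$ and the $v$-terms give $\E[H_k]v_{k,0}^{(1)}$ directly. What remains is the fraction part,
\[
\frac{S_k}{2(1-\rho_k^+)} - \frac{S_{k-1}}{2(1-\rho_{k-1}^+)}.
\]

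This bookkeeping is the main obstacle. First relate $S_k$ to $S_{k-1}$. The quantity $\rho_\ell(\rho_k^+-\rho_\ell)$ depends on $k$ through $\rho_k^+$, and one checks that
\[
S_k = S_{k-1} + 2\rho_k\E[\tilde H_k] + \E[H_k]^2\delta_k^{(2)} + 2\rho_k\rho_{k-1}^+ .
\]
The last term arises because $\sum_{\ell\le k-1}\rho_\ell\rho_k + \rho_k\rho_{k-1}^+ = 2\rho_k\rho_{k-1}^+$.

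Next, use
\[
\frac{1}{1-\rho_k^+} - \frac{1}{1-\rho_{k-1}^+} = \frac{\rho_k}{(1-\rho_k^+)(1-\rho_{k-1}^+)}.
\]
With this, the fraction part equals
\[
\frac{\rho_k S_k}{2(1-\rho_k^+)(1-\rho_{k-1}^+)} + \frac{S_k - S_{k-1}}{2(1-\rho_{k-1}^+)}.
\]
The first piece, with $S_k$ expanded, yields the three terms of (\ref{eq:lemma:E[U_k^PR]}) carrying denominator $(1-\rho_k^+)(1-\rho_{k-1}^+)$. The second piece yields $\rho_k\E[\tilde H_k]/(1-\rho_{k-1}^+)$, $\rho_k\rho_{k-1}^+/(1-\rho_{k-1}^+)$, and $\E[H_k]^2\delta_k^{(2)}/(2(1-\rho_{k-1}^+))$, which matches the stated formula.

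For $k=1$ everything is consistent with $\rho_0^+=0$ and $S_0=0$. In that case the formula reduces to Lemma \ref{lemma-unfinished} with $K=1$.
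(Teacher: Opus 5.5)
Your proposal is correct and takes the same route as the paper. The paper also writes $\E[U_k^{\PR}]$ as the difference between the mean total work of classes $1$ to $k$ and of classes $1$ to $k-1$, each obtained from Lemma~\ref{lemma-unfinished} with $K$ replaced by $k$ and $k-1$, and it covers $k=1$ through $\rho_0^+=0$. Your bookkeeping, $S_k-S_{k-1}=2\rho_k\E[\tilde H_k]+\E[H_k]^2\delta_k^{(2)}+2\rho_k\rho_{k-1}^+$ together with the partial-fraction identity, correctly fills in the algebra the paper leaves to the reader.
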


\begin{proof}
Note that $\E[U_1^{\PR}]$ equals $\E[U_1]$ in (\ref{eq:K=1-U}), and we
observe that Lemma \ref{lemma:E[U_k^PR]} holds for $k=1$ because
$\rho_0^+=0$. Let $U_{k+}^{\PR}$ ($k \in \K$) denote the amount of
unfinished work in classes 1 to $k$. In the preemptive-resume priority
queue, customers in class $k$ ($k \in \K_{K-1}^+$) are not affected by
the existence of customers in lower classes. Therefore,
$\E[U_{k+}^{\PR}]$ is given by
\[
\E\big[U_{k+}^{\PR}\big]
= 
\sum_{m=1}^k \E\big[U_m^{\PR}\big],
\quad
k \in \K,
\]
and $\E\big[U_{k+}^{\PR}\big]$ is obtained by letting $K=k$ in
(\ref{eq:lemma-unfinished}). Equation (\ref{eq:lemma:E[U_k^PR]}) for
$k \in \K_2^-$ now follows from $\E[U_k^{\PR}] =
\E\big[U_{k+}^{\PR}\big] - \E\big[U_{(k-1)+}^{\PR}\big]$.
\end{proof}

\begin{theorem}
The mean waiting time $\E\big[W_k^{\PR}\big]$ ($k \in \K$) of
class $k$ customers in the stationary preemptive-resume priority queue
satisfying Assumptions~\ref{assumption-1} to \ref{assumption-3}
is given by
\begin{align*}
\E\left[W_k^{\PR}\right]
&=
\frac{\ds\sum_{\ell \in \K_k^+}
\rho_{\ell} \E[\tilde{H}_{\ell}]}
{(1-\rho_k^+)(1-\rho_{k-1}^+)}
+ 
\frac{\ds\sum_{\ell \in \K_k^+} \rho_{\ell} (\rho_k^+ - \rho_{\ell})}
{2(1-\rho_k^+)(1-\rho_{k-1}^+)}
+
\frac{\rho_{k-1}^+}{1-\rho_{k-1}^+}
\\
&\qquad {}
+
\frac{\ds\sum_{\ell \in \K_k^+}
\rho_{\ell}
\left[
\pi_{\ell}^{({\on})} \rho_{\ell} \left( 1+\E[\tilde{C}_{\ell}] \right)
+
\E[H_{\ell}]
\left(
\E[\tilde{\Lambda}_{\ell}] - \pi_{\ell}^{({\on})} 
\frac{\E[C_{\ell} \Lambda_{\ell}]}{\E[C_{\ell}]}
\right)
\right]
}
{(1-\rho_k^+)(1-\rho_{k-1}^+)}
\\
& \qquad\quad {}
+
\frac{\E[H_k]}{1-\rho_{k-1}^+} \left(
\E[\tilde{\Lambda}_k] - \pi_k^{({\on})}
\frac{\E[C_k \Lambda_k ]}{\E[C_k]} \right)
\\
&\qquad\qquad {}
+
\pi_k^{({\on})}
\left[
\left( 1 + \frac{\rho_k}{1-\rho_{k-1}^+} \right)
\left( 1+\E[\tilde{C}_k] \right)
-
\frac{\E[\Lambda_k(\tilde{C}_k)]}{\lambda_k}
\right],
\quad
k \in \K,
\end{align*}
where $\rho_0^+=0$.
\end{theorem}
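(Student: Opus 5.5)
The plan is to combine the two expressions for $\E[U_k^{\PR}]$ already available and solve for $\E[W_k^{\PR}]$. For $k \in \K_2^-$, equation (\ref{eq:U_k-W_k-PR}) gives
\[
\rho_k \E[W_k^{\PR}] = \E[U_k^{\PR}] - \rho_k - \frac{\rho_k \E[\tilde{H}_k]}{1-\rho_{k-1}^+}.
\]
I will substitute the explicit value of $\E[U_k^{\PR}]$ from Lemma \ref{lemma:E[U_k^PR]} and divide by $\rho_k$. For $k=1$ I will instead check directly that the claimed formula with $\rho_0^+=0$ reduces to (\ref{K=1}). Since $\E[W_1^{\PR}]=\E[W_1]$, this is a pure algebraic check of the same kind as the general case, so I will treat both cases uniformly.

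After the subtraction, the terms $\rho_k$ and $\rho_k\E[\tilde{H}_k]/(1-\rho_{k-1}^+)$ in (\ref{eq:lemma:E[U_k^PR]}) cancel exactly. Dividing the remainder by $\rho_k$ gives the following correspondences:
\begin{itemize}
\item The $\sum \rho_\ell \E[\tilde H_\ell]$ and $\sum \rho_\ell(\rho_k^+-\rho_\ell)$ terms give the first two terms of the theorem directly.
\item The term $\rho_k\rho_{k-1}^+/(1-\rho_{k-1}^+)$ gives $\rho_{k-1}^+/(1-\rho_{k-1}^+)$.
\item The three terms containing $\delta_\ell^{(2)}$, $\delta_k^{(2)}$ and $v_{k,0}^{(1)}$ must be expanded using Lemma \ref{lemma-Taki05}.
\end{itemize}

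Specifically, I will substitute
\[
\E[H_\ell]^2\delta_\ell^{(2)}/2 = \rho_\ell \E[H_\ell]\Big(\E[\tilde\Lambda_\ell]-\pi_\ell^{[\on]}\tfrac{\E[C_\ell\Lambda_\ell]}{\E[C_\ell]}\Big) + \rho_\ell^2\pi_\ell^{[\on]}(1+\E[\tilde C_\ell]).
\]
This uses $\lambda_\ell\E[H_\ell]=\rho_\ell$ and $\lambda_\ell^2\E[H_\ell]^2=\rho_\ell^2$. Inserted into the sum over $\K_k^+$ with prefactor $\rho_k/(2(1-\rho_k^+)(1-\rho_{k-1}^+))$, and then divided by $\rho_k$, it produces exactly the fourth (bracketed) term of the theorem.

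The standalone term $\E[H_k]^2\delta_k^{(2)}/(2(1-\rho_{k-1}^+))$, divided by $\rho_k$, splits into two parts:
\begin{itemize}
\item $\frac{\E[H_k]}{1-\rho_{k-1}^+}\big(\E[\tilde\Lambda_k]-\pi_k^{[\on]}\E[C_k\Lambda_k]/\E[C_k]\big)$;
\item $\pi_k^{[\on]}\frac{\rho_k}{1-\rho_{k-1}^+}(1+\E[\tilde C_k])$.
\end{itemize}
Finally, $v_{k,0}^{(1)}\E[H_k]/\rho_k = \pi_k^{[\on]}(1+\E[\tilde C_k]) - \pi_k^{[\on]}\E[\Lambda_k(\tilde C_k)]/\lambda_k$, obtained from (\ref{v_{k,0}^1}) with $\E[H_k]/\rho_k = 1/\lambda_k$. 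Combining this with the second part of the split gives the last bracketed term, $\pi_k^{[\on]}[(1+\rho_k/(1-\rho_{k-1}^+))(1+\E[\tilde C_k]) - \E[\Lambda_k(\tilde C_k)]/\lambda_k]$. Here I read the statement's $\pi^{(\on)}$ as $\pi^{[\on]}$.

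There is no conceptual obstacle, since the probabilistic work is already contained in (\ref{eq:U_k-W_k-PR}) and Lemma \ref{lemma:E[U_k^PR]}. The main care point is the bookkeeping when splitting $\delta_\ell^{(2)}$: the factor $2$ and the powers of $\E[H_\ell]$ versus $\lambda_\ell$ must be tracked so that the terms regroup into the $\rho_\ell$-weighted bracket. In particular, $\rho_\ell^2\pi_\ell^{[\on]}(1+\E[\tilde C_\ell])$ must be written as $\rho_\ell\cdot\pi_\ell^{[\on]}\rho_\ell(1+\E[\tilde C_\ell])$.

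For $k=1$, the same manipulation applied to (\ref{eq:K=1-U}) shows that the sum-over-$\K_1^+$ term and the standalone class-1 terms combine. The combined coefficient is $\frac{1}{1-\rho_1}\cdot\rho_1 + 1 = \frac{1}{1-\rho_1}$ in front of $\E[H_1](\E[\tilde\Lambda_1]-\cdots)$, and the analogous combination occurs for the $(1+\E[\tilde C_1])$ term. This recovers (\ref{K=1}) and completes the proof.
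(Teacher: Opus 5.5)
Your proposal is the paper's own proof with the algebra written out: insert Lemma~\ref{lemma:E[U_k^PR]} into (\ref{eq:U_k-W_k-PR}), expand $\delta_\ell^{(2)}$, $\delta_k^{(2)}$ and $v_{k,0}^{(1)}$ by Lemma~\ref{lemma-Taki05}, and divide by $\rho_k$. Your term-by-term bookkeeping for $k\in\K_2^-$ is correct. That includes regrouping $\rho_\ell^2\pi_\ell^{[\on]}(1+\E[\tilde C_\ell])$ into the bracketed sum and merging the $\delta_k^{(2)}$ and $v_{k,0}^{(1)}$ pieces into the last term.

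The one point to fix is $k=1$. You track only the coefficients of $\E[H_1](\E[\tilde\Lambda_1]-\cdots)$ and $(1+\E[\tilde C_1])$. The remaining term of the theorem at $k=1$ is $\rho_1\E[\tilde H_1]/(1-\rho_1)$. By contrast, (\ref{K=1}) as printed contains $\rho_1\E[H_1]/(1-\rho_1)$, coming from the $\rho_1$ inside the bracket. So your claim that the combination ``recovers (\ref{K=1})'' is not literally true.

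The discrepancy is a misprint in (\ref{K=1}). There is an analogous one in (\ref{eq:K=1-U}), which has $\rho_1\E[H_1]/(1-\rho_1)$ where $\rho_1\E[\tilde H_1]/(1-\rho_1)$ belongs. To see this, take Bernoulli arrivals with $H_1\equiv 1$: no customer ever waits, yet (\ref{K=1}) evaluates to $\rho_1/(1-\rho_1)$. The paper's own one-line treatment of $k=1$ points to the same misprinted display.

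The clean route is the uniform treatment you allude to. Class~1 is never preempted, so (\ref{eq:U_k-W_k-PR}) with $\rho_0^+=0$ is just the Little relation $\E[U_1]=\rho_1\E[W_1]+\rho_1(1+\E[\tilde H_1])$ from the proof of Theorem~\ref{theorem-conservation}. Also, Lemma~\ref{lemma:E[U_k^PR]} at $k=1$ coincides with Lemma~\ref{lemma-unfinished} for $K=1$. Hence your $k\ge 2$ computation applies verbatim. Equivalently, divide Theorem~\ref{theorem-conservation} at $K=1$ by $\rho_1$, which gives the correct $\rho_1\E[\tilde H_1]/(1-\rho_1)$ term.
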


\begin{proof}
The theorem for $k \in \K_2^-$ immediately follows from
(\ref{eq:U_k-W_k-PR}) and Lemma \ref{lemma:E[U_k^PR]}.  Recall that
$\E\left[W_1^{\PR}\right]$ is identical to $\E[W_1]$ in
(\ref{K=1}). Because $\rho_0^+=0$, we can confirm that the theorem
holds for $k=1$, too.
\end{proof}

Let $D_k^{\PR}$ ($k \in \K$) denote the system delay (i.e., the
sojourn time in the system) of a randomly chosen customer in class $k$
in the stationary preemptive-resume priority queue.

\begin{corollary}
The mean system delay $\E\big[D_k^{\PR}\big]$ ($k \in \K$)
of class $k$ in the stationary preemptive-resume priority queue 
satisfying Assumptions~\ref{assumption-1} to \ref{assumption-3}
is given by
\begin{equation}
\E\big[D_k^{\PR}\big] 
= 
\E\big[W_k^{\PR}\big] + \frac{\E[H_k]-\rho_{k-1}^+}{1-\rho_{k-1}^+},
\quad
k \in \K,
\label{eq:E[D_k^PR]}
\end{equation} 
where $\rho_0^+=0$.
\end{corollary}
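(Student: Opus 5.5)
The plan is to decompose the system delay into the waiting time plus the service completion time and then invoke the earlier results. By definition, a class $k$ customer's sojourn in the preemptive-resume priority queue splits into two consecutive intervals. The first runs from the arrival slot to the slot at which the customer's service starts; this is $W_k^{\PR}$. The second runs from the start of service to its completion, including all preemptions by classes $1$ to $k-1$; this is the service completion time $H_k^{\PR}$ defined before Lemma~\ref{lemma:mean_completion}. So I will write $D_k^{\PR} = W_k^{\PR} + H_k^{\PR}$ pathwise. Taking expectations gives $\E[D_k^{\PR}] = \E[W_k^{\PR}] + \E[H_k^{\PR}]$. This uses linearity only, so no independence between the two parts is needed.

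For $k \in \K_2^-$, I will substitute $\E[H_k^{\PR}]$ from (\ref{eq:E[H_k^PR]}) in Lemma~\ref{lemma:mean_completion}. That immediately yields (\ref{eq:E[D_k^PR]}), with $\E[W_k^{\PR}]$ given by the preceding theorem.

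For $k=1$, class 1 is never preempted, so $H_1^{\PR}=H_1$ and $\E[D_1^{\PR}] = \E[W_1^{\PR}] + \E[H_1]$. Since $\rho_0^+=0$, the right-hand side of (\ref{eq:E[D_k^PR]}) reduces to $\E[W_1^{\PR}]+\E[H_1]$, so this case is consistent with the formula.

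The only point needing care is that the two pieces match the slot conventions of the model. In the model, a customer arriving at slot $n$ is counted in $X_n$ and served starting from slot $n$, and the waiting time is measured consistently with the Little's-law relation used in (\ref{eq:U_k-W_k-PR}). I will check that $H_k^{\PR}=\sum_{m=1}^{H_k-1}F_{k-1,m}+1$ counts exactly the slots from the first service slot through the final one, so that no slot is double counted or omitted when the two intervals are concatenated. Once this bookkeeping is confirmed, which I expect to be the only (minor) obstacle, the corollary follows directly.
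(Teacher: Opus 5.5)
Your proposal is correct and matches the paper's proof: the paper also uses $\E[D_k^{\PR}] = \E[W_k^{\PR}] + \E[H_k^{\PR}]$ together with Lemma~\ref{lemma:mean_completion}. Your separate check of $k=1$ (where $H_1^{\PR}=H_1$ and $\rho_0^+=0$) covers a case the paper leaves implicit, since that lemma is stated only for $k \in \K_2^-$.
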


\begin{proof}
The corollary follows from $\E\big[D_k^{\PR}\big] =
\E\big[W_k^{\PR}\big] + \E[H_k^{\PR}]$ and Lemma
\ref{lemma:mean_completion}.
\end{proof}

\section{The nonpreemptive priority queues}
\label{sec-nonpreemptive}

In this section, we consider the mean waiting times in the
nonpreemptive priority queue. Let $W_k^{\NP}$ ($k \in \K$) denote the
waiting time of a randomly chosen customer of class $k$ in steady
state.  Note that $W_K^{\NP} = W_K^{\PR}$ because the service of a class
$K$ customer can start only when there are no customers of classes 1
to $K-1$ both in the preemptive-resume and nonpreemptive priority
queues. It then follows that
\begin{equation}
\E\big[W_K^{\NP}\big] = E\big[W_K^{\PR}\big],
\label{eq:W_K^NP=W_K^PR}
\end{equation}
where $E\big[W_K^{\PR}\big]$ is given by (\ref{eq:E[D_k^PR]}).  We
thus consider the mean waiting time in class $k$ ($k \in \K_{K-1}^+$)
below.

Let $U_k^{\NP}$ ($k \in \K$) denote the amount of unfinished
work of class $k$ in steady state.

\begin{lemma}\label{lemma:E[U_k^NP]}
The mean amount $U_k^{\NP}$ ($k \in \K_{K-1}^+$) of unfinished work of
class $k$ is given by
\[
\E[U_k^{\NP}]
=
\E[U_k^{\PR}] 
- 
\frac{\rho_{k-1}^+}{1-\rho_{k-1}^+} \cdot \rho_k \E[\tilde{H}_k]
+
\frac{\rho_k \ds\sum_{\ell \in \K_{k+1}^-} \rho_{\ell} \E[\tilde{H}_{\ell}]}
{(1-\rho_{k-1}^+)(1-\rho_k^+)},
\quad
k \in \K_{K-1}^+,
\]
where $\rho_0^+=0$ and $\E[U_k^{\PR}]$ is given by
(\ref{eq:lemma:E[U_k^PR]}). 
\end{lemma}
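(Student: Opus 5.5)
The plan is to work with cumulative work of the top classes, as in the proof of Lemma~\ref{lemma:E[U_k^PR]}, and take differences. Let $U_{k+}^{\NP}=\sum_{m=1}^k U_m^{\NP}$, so that $\E[U_k^{\NP}]=\E[U_{k+}^{\NP}]-\E[U_{(k-1)+}^{\NP}]$.

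The key comparison is between $U_{k+}^{\NP}$ and $U_{k+}^{\PR}$. In the preemptive-resume queue, the classes $1,\ldots,k$ form a work-conserving system that ignores lower classes. Its total work is therefore discipline-invariant, and Lemma~\ref{lemma-unfinished} with $K=k$ gives $\E[U_{k+}^{\PR}]$. In the nonpreemptive queue, the only deviation from that system happens while a customer of some class $\ell\in\K_{k+1}^-$ is in service. During such a service, class $1,\ldots,k$ work accumulates without being processed. After the service ends, this accumulated work is drained by the classes $1,\ldots,k$ subsystem at net rate $1-\rho_k^+$ before the system looks again like the preemptive one.

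Step 1 is to show, for each such lower-class customer, that the expected extra area of class $1,\ldots,k$ work it creates is $\rho_k^+\E[H_\ell(H_\ell-1)]/\{2(1-\rho_k^+)\}$. Multiplying by the rate $\lambda_\ell$ then gives
\[
\E[U_{k+}^{\NP}]=\E[U_{k+}^{\PR}]+\frac{\rho_k^+}{1-\rho_k^+}\sum_{\ell\in\K_{k+1}^-}\rho_\ell\E[\tilde{H}_\ell].
\]
Step 2 applies the same identity with $k-1$ in place of $k$. There the blocking classes are $\K_k^-$, which now includes class $k$ itself.

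Step 3 subtracts the two identities and uses $\E[U_{k+}^{\PR}]-\E[U_{(k-1)+}^{\PR}]=\E[U_k^{\PR}]$. Two contributions remain:
\begin{itemize}
\item the class $k$ term from the $(k-1)$ identity, which produces $-\rho_{k-1}^+\rho_k\E[\tilde{H}_k]/(1-\rho_{k-1}^+)$;
\item for each $\ell\ge k+1$, the term
\[
\rho_\ell\E[\tilde{H}_\ell]\left(\frac{\rho_k^+}{1-\rho_k^+}-\frac{\rho_{k-1}^+}{1-\rho_{k-1}^+}\right)
=\frac{\rho_k\,\rho_\ell\E[\tilde{H}_\ell]}{(1-\rho_k^+)(1-\rho_{k-1}^+)}.
\]
\end{itemize}
These are exactly the two correction terms in the lemma.

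The main obstacle is Step 1, because arrivals are correlated in time, so the accumulated work is not simply rate times elapsed time. The key observation is that a class $\ell>k$ service starts only when no customers of classes $1,\ldots,k$ are present. By Assumption~\ref{assumption-2}, all streams $1,\ldots,k$ are then in the idle state $0$.

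I would try to exploit this common starting state $0$ through a cycle-level comparison, rather than computing transient means directly. The same starting state occurs at the start of every preemptive-resume busy cycle, where the moments are given by Theorem~\ref{theorem-f} and Corollary~\ref{corollary-f}. The aim is to show that the starting-state corrections, which involve the $v_{m,0}^{(1)}$ terms, are the same in both systems and cancel. What is left should be the purely "Poisson-like" term $\rho_k^+\E[H_\ell(H_\ell-1)]/2$ for the area of the accumulated work, divided by $1-\rho_k^+$ for the subsequent drain; the factor $1/(1-\rho_k^+)$ comes from $f_{k,0}^{(1)}$. This cancellation is the delicate point I expect to need care with. Its outcome is consistent with the target formula, which contains no $v^{(1)}$ or $\delta^{(2)}$ corrections beyond those already in $\E[U_k^{\PR}]$.
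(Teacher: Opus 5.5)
\textbf{Gap in Step~1.} Your overall structure matches the paper's. The paper also proves exactly your Step~1 identity for the cumulative work of classes $1,\ldots,k$ and then differences in $k$. Your Steps~2--3, including $\frac{\rho_k^+}{1-\rho_k^+}-\frac{\rho_{k-1}^+}{1-\rho_{k-1}^+}=\frac{\rho_k}{(1-\rho_k^+)(1-\rho_{k-1}^+)}$, are correct.

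The gap is Step~1, which carries all the content and which you do not prove. You state the per-customer target, note that correlated arrivals break the naive argument, and then only hope that the $v^{(1)}_{m,0}$-type starting-state corrections cancel. Nothing along your accumulate-then-drain route shows this, for two reasons:
\begin{itemize}
\item Started from the all-idle state, the class $1,\ldots,k$ work arriving in the first $n$ slots does not in general have mean $\rho_k^+ n$.
\item The time to clear the accumulated excess is not that excess divided by $1-\rho_k^+$. The excess drains one unit per busy cycle of the class $1,\ldots,k$ subsystem, and the first such cycle after the class-$\ell$ service ends is a residual cycle. Its starting phase is correlated with how much was accumulated.
\end{itemize}
Combining these terms would need a transient joint computation that the proposal does not contain.

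\textbf{The missing idea.} Measure the excess as displaced low-priority work rather than accumulated high-priority work. The paper introduces an auxiliary model in which classes $1,\ldots,k$ preempt classes $k+1,\ldots,K$. In that model, the class $1,\ldots,k$ work plus the residual of the started low-priority customer coincides sample-path-wise with the same quantity in the nonpreemptive queue. At the same time, the class $1,\ldots,k$ work there is exactly the preemptive-resume work. So the excess is just the frozen residual of a preempted low-priority customer.

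Idle periods are geometric, and all class $1,\ldots,k$ streams are in state $0$ whenever their work is zero. Hence busy cycles of that subsystem start with the same probability at every such slot, independently of the low-priority service in progress (discrete-time conditional PASTA). This gives the term $\rho_k^+\sum_{\ell\in\K_{k+1}^-}\frac{\rho_\ell}{1-\rho_k^+}\E[\tilde{H}_\ell]$ directly.

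\textbf{The fix in your per-customer framework.} Take a class-$\ell$ customer with $H_\ell=h$. After each of its first $h-1$ units, an independent busy cycle $F'_j$ of the class $1,\ldots,k$ subsystem starts from the all-idle state and freezes the remaining $h-j$ units for $F'_j-1$ slots. At every slot, the nonpreemptive minus preemptive-resume class $1,\ldots,k$ work equals the number of this customer's units not yet served in the auxiliary model minus the number not yet served in the nonpreemptive queue. Hence the excess area is exactly $\sum_{j=1}^{h-1}(h-j)(F'_j-1)$. By Theorem~\ref{theorem-f}, $\E[F'_j]=f_{k,0}^{(1)}=1/(1-\rho_k^+)$ holds exactly even for correlated arrivals. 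The mean excess is therefore $\frac{\rho_k^+}{1-\rho_k^+}\cdot\frac{h(h-1)}{2}$, with no correction terms to cancel.
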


\begin{proof}
To prove the lemma, we introduce an auxiliary model of a mixed
priority discipline, where customers of classes 1 to $k$ ($k \in
\K_{K-1}^+$) follow a preemptive-resume priority discipline, while
customers of classes $k+1$ to $K$ follow the nonpreemptive priority
discipline. For simplicity, we fix $k$ ($k \in \K_{K-1}^+$) and call
classes 1 to $k$ (resp.\ classes $k+1$ to $K$) the high-priority class
(resp.\ the low-priority class). As shown in Fig.\ \ref{figure:aux}
(a) and (b), the sum $U_{k+}^{\dag}$ of the total amounts of
unfinished work of the high-priority class and the remaining service
time of the low-priority class in the nonpreemptive priority queue and
in the auxiliary model are identical sample path-wise. In what
follows, we derive two different expressions for $\E[U_{k+}^{\dag}]$
by considering the nonpreemptive priority queue and the auxiliary
model.

We first consider the nonpreemptive priority queue. It is easy to see that 
(cf.\ Fig.\ \ref{figure:aux} (a))
\begin{equation}
\E\big[U_{k+}^{\dag}\big] 
= 
\sum_{\ell \in \K_k^+} \E\big[U_{\ell}^{\NP}\big] 
+ 
\sum_{\ell \in \K_{k+1}^-} 
\rho_\ell \big( 1 + \E\big[\tilde{H}_{\ell}\big] \big), 
\quad 
k \in K_{K-1}^+.
\label{eq:E[U_k^dag]-NP}
\end{equation}

Next, we derive $\E\big[U_k^{\dag}\big]$ by considering the auxiliary
model. We observe from Fig.\ \ref{figure:aux} (b) and (c) that
$U_k^{\dag}$ is composed of three factors: (i) the total amount of
unfinished work in high-priority customers (white area), (ii) the sum
of the service unit being served and the remaining service time of a
low-priority customer when she is being served (black area), and (iii)
the remaining service time of a low-priority customer when the service
is being preempted (striped area). It is easy to see that the
contribution of factor (i) to $\E\big[U_k^{\dag}\big]$ is given by
$\E[U_1^{\PR}] + \E[U_2^{\PR}] + \cdots + \E[U_k^{\PR}]$ and the
contribution of factor (ii) to $\E\big[U_k^{\dag}\big]$ is given by $1
+ \E[\tilde{H}_{\ell}]$ ($\ell \in \K_{k+1}^-$) with probability
$\rho_{\ell}$. We thus consider the contribution of factor (iii)
(i.e., striped area in Fig.\ \ref{figure:aux} (b)) to
$\E\big[U_k^{\dag}\big]$ below.

\begin{figure}
\centering
\subfigure[Nonpreemptive priority (original)]{%
\begin{picture}(150,30)(0,0)
\put(0,0){\vector(1,0){145}}
\put(150,0){\makebox(0,0){time}}
\put(5,0){\line(0,1){12}}
\multiput(5,12)(2,-2){6}{\line(1,0){2}}
\multiput(7,10)(2,-2){5}{\line(0,1){2}}
\put(17,2){\line(0,1){6}}
\multiput(17,8)(2,-2){4}{\line(1,0){2}}
\multiput(19,6)(2,-2){4}{\line(0,1){2}}
\multiput(35,0)(0.1,0){20}{\line(0,1){18}}
\multiput(37,0)(0.1,0){20}{\line(0,1){16}}
\multiput(39,0)(0.1,0){20}{\line(0,1){14}}
\multiput(41,0)(0.1,0){20}{\line(0,1){12}}
\multiput(43,0)(0.1,0){20}{\line(0,1){10}}
\multiput(45,0)(0.1,0){20}{\line(0,1){8}}
\multiput(47,0)(0.1,0){20}{\line(0,1){6}}
\multiput(49,0)(0.1,0){20}{\line(0,1){4}}
\multiput(51,0)(0.1,0){20}{\line(0,1){2}}
\multiput(35,18)(2,-2){9}{\line(1,0){2}}
\multiput(37,16)(2,-2){9}{\line(0,1){2}}
\put(41,14){\line(0,1){16}}
\multiput(41,30)(2,-2){11}{\line(1,0){2}}
\multiput(43,28)(2,-2){10}{\line(0,1){2}}
\put(63,10){\line(0,1){6}}
\multiput(63,16)(2,4){2}{\line(1,0){2}}
\multiput(65,16)(2,4){2}{\line(0,1){4}}
\multiput(67,24)(2,-2){9}{\line(1,0){2}}
\multiput(69,22)(2,-2){8}{\line(0,1){2}}
\put(85,8){\line(0,1){6}}
\multiput(85,14)(2,-2){6}{\line(1,0){2}}
\multiput(87,12)(2,-2){5}{\line(0,1){2}}
\put(97,4){\line(0,1){14}}
\multiput(97,18)(2,-2){8}{\line(1,0){2}}
\multiput(99,16)(2,-2){7}{\line(0,1){2}}
\put(113,4){\line(0,1){8}}
\multiput(113,12)(2,-2){6}{\line(1,0){2}}
\multiput(115,10)(2,-2){5}{\line(0,1){2}}
\put(125,2){\line(0,1){4}}
\multiput(125,6)(2,-2){3}{\line(1,0){2}}
\multiput(127,4)(2,-2){2}{\line(0,1){2}}
\multiput(131,0)(0.1,0){20}{\line(0,1){12}}
\multiput(133,0)(0.1,0){20}{\line(0,1){10}}
\multiput(135,0)(0.1,0){20}{\line(0,1){8}}
\multiput(131,12)(2,-2){3}{\line(1,0){2}}
\multiput(133,10)(2,-2){3}{\line(0,1){2}}
\put(140,5){\makebox(0,0){...}}
\end{picture}}
%%%%%%%%%%%%%%%%%%%%%%%%%%%%%%%%%%
\subfigure[Auxiliary model (PR+NR)]{%
\begin{picture}(150,30)(0,0)
\put(0,0){\vector(1,0){145}}
\put(150,0){\makebox(0,0){time}}
\put(5,0){\line(0,1){12}}
\multiput(5,12)(2,-2){6}{\line(1,0){2}}
\multiput(7,10)(2,-2){5}{\line(0,1){2}}
\put(17,2){\line(0,1){6}}
\multiput(17,8)(2,-2){4}{\line(1,0){2}}
\multiput(19,6)(2,-2){4}{\line(0,1){2}}
\multiput(35,0)(0.1,0){20}{\line(0,1){18}}
\multiput(37,0)(0.1,0){20}{\line(0,1){16}}
\multiput(39,0)(0.1,0){20}{\line(0,1){14}}
\multiput(35,18)(2,-2){3}{\line(1,0){2}}
\multiput(37,16)(2,-2){2}{\line(0,1){2}}
\put(41,0){\line(0,1){30}}
\multiput(41,30)(2,-2){11}{\line(1,0){2}}
\multiput(43,28)(2,-2){10}{\line(0,1){2}}
\put(41,12){\line(1,0){18}}
\multiput(41,0.5)(0,0.5){23}{\line(1,0){18}}
\multiput(59,0)(0.1,0){20}{\line(0,1){12}}
\multiput(61,0)(0.1,0){20}{\line(0,1){10}}
\put(63,0){\line(0,1){16}}
\multiput(63,16)(2,4){2}{\line(1,0){2}}
\multiput(65,16)(2,4){2}{\line(0,1){4}}
\multiput(67,24)(2,-2){9}{\line(1,0){2}}
\multiput(69,22)(2,-2){8}{\line(0,1){2}}
\put(63,8){\line(1,0){20}}
\multiput(63,0.5)(0,0.5){16}{\line(1,0){20}}
\multiput(83,0)(0.1,0){20}{\line(0,1){8}}
\put(85,0){\line(0,1){14}}
\multiput(85,14)(2,-2){6}{\line(1,0){2}}
\multiput(87,12)(2,-2){5}{\line(0,1){2}}
\put(85,6){\line(1,0){8}}
\multiput(85,0.5)(0,0.5){12}{\line(1,0){8}}
\multiput(93,0)(0.1,0){20}{\line(0,1){6}}
\multiput(95,0)(0.1,0){20}{\line(0,1){4}}
\put(97,0){\line(0,1){18}}
\multiput(97,18)(2,-2){8}{\line(1,0){2}}
\multiput(99,16)(2,-2){7}{\line(0,1){2}}
\put(97,2){\line(1,0){26}}
\multiput(97,0.5)(0,0.5){4}{\line(1,0){26}}
\multiput(123,0)(0.1,0){20}{\line(0,1){2}}
\put(113,4){\line(0,1){8}}
\multiput(113,12)(2,-2){6}{\line(1,0){2}}
\multiput(115,10)(2,-2){6}{\line(0,1){2}}
\put(125,2){\line(0,1){4}}
\multiput(125,6)(2,-2){3}{\line(1,0){2}}
\multiput(127,4)(2,-2){2}{\line(0,1){2}}
\multiput(131,0)(0.1,0){20}{\line(0,1){12}}
\multiput(133,0)(0.1,0){20}{\line(0,1){10}}
\multiput(135,0)(0.1,0){20}{\line(0,1){8}}
\multiput(131,12)(2,-2){3}{\line(1,0){2}}
\multiput(133,10)(2,-2){3}{\line(0,1){2}}
\put(140,5){\makebox(0,0){...}}
\end{picture}}
%%%%%%%%%%%%%%%%%%%%%%%%%%%%%%
\subfigure[Preemptive-resume Priority (high-priority class only)]{%
\begin{picture}(150,20)(0,0)
\put(0,0){\vector(1,0){145}}
\put(150,0){\makebox(0,0){time}}
\put(5,0){\line(0,1){12}}
\multiput(5,12)(2,-2){6}{\line(1,0){2}}
\multiput(7,10)(2,-2){5}{\line(0,1){2}}
\put(17,2){\line(0,1){6}}
\multiput(17,8)(2,-2){4}{\line(1,0){2}}
\multiput(19,6)(2,-2){4}{\line(0,1){2}}
\put(41,0){\line(0,1){18}}
\multiput(41,18)(2,-2){10}{\line(1,0){2}}
\multiput(43,16)(2,-2){9}{\line(0,1){2}}
\put(63,0){\line(0,1){8}}
\multiput(63,8)(2,4){2}{\line(1,0){2}}
\multiput(65,8)(2,4){2}{\line(0,1){4}}
\multiput(67,16)(2,-2){9}{\line(1,0){2}}
\multiput(69,14)(2,-2){8}{\line(0,1){2}}
\put(85,0){\line(0,1){8}}
\multiput(85,8)(2,-2){5}{\line(1,0){2}}
\multiput(87,6)(2,-2){4}{\line(0,1){2}}
\put(97,0){\line(0,1){16}}
\multiput(97,16)(2,-2){8}{\line(1,0){2}}
\multiput(99,14)(2,-2){7}{\line(0,1){2}}
\put(113,2){\line(0,1){8}}
\multiput(113,10)(2,-2){5}{\line(1,0){2}}
\multiput(115,8)(2,-2){5}{\line(0,1){2}}
\put(125,0){\line(0,1){6}}
\multiput(125,6)(2,-2){3}{\line(1,0){2}}
\multiput(127,4)(2,-2){3}{\line(0,1){2}}
\put(140,5){\makebox(0,0){...}}
\end{picture}}
\caption{Total amounts of unfinished work of classes 1 to $k$.  The
  black areas indicate the remaining service times of lower-priority
  class customers being served and the striped areas indicate
  remaining service times of waiting lower-priority class customers.}
\label{figure:aux}
\end{figure}
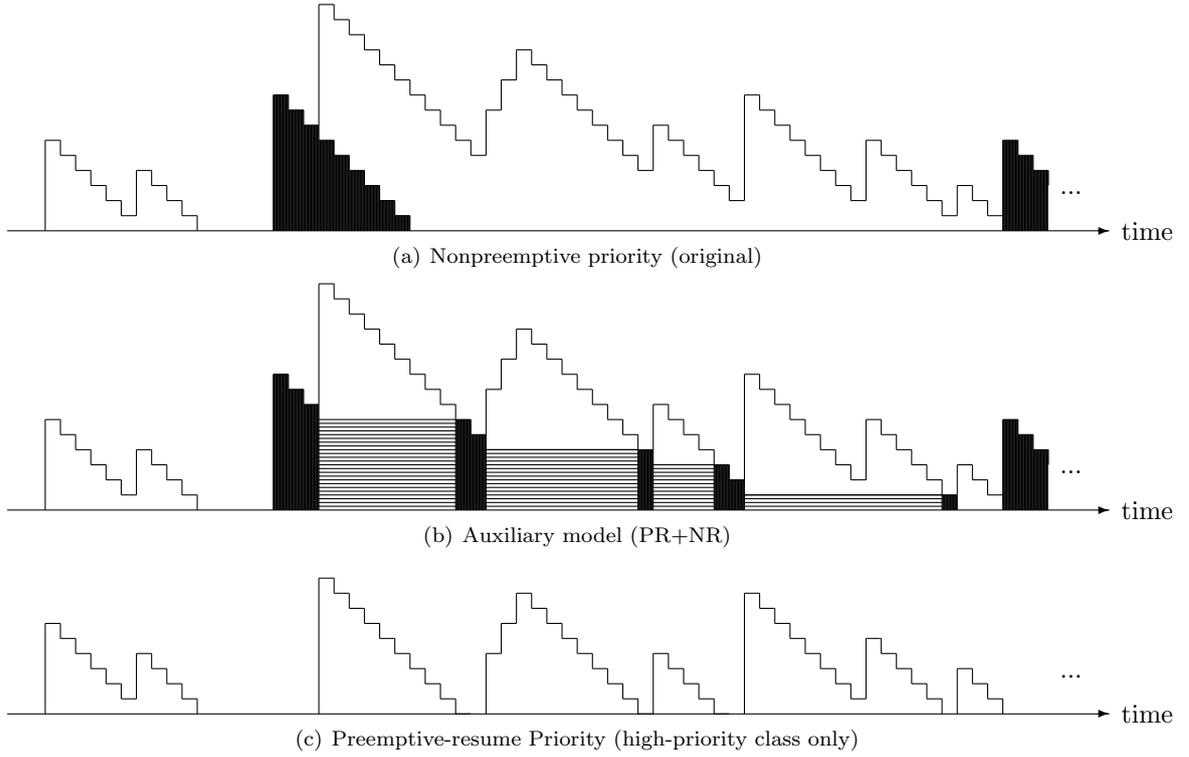

We observe that busy cycles of high-priority customers start only when
the server is idle or a low priority customer is being served.
Therefore, owing to the discrete-time version of conditional PASTA, a
busy cycle starts immediately after an idle period with probability
$(1-\rho)/(1-\rho_k^+)$, and if this is the case, there are no
contribution of factor (iii) to $\E\big[U_k^{\dag}\big]$, and
otherwise, the contribution of factor (iii) to
$\E\big[U_k^{\dag}\big]$ is given by $\E[\tilde{H}_{\ell}]$ ($\ell \in
\K_{k+1}^-$) with probability $\rho_{\ell}/(1-\rho_k^+)$. Based on
these observations, we obtain
\begin{equation}
\E\big[U_k^{\dag}\big]
=
\sum_{\ell \in \K_k^+} \E[U_{\ell}^{\PR}]
+
\sum_{\ell \in \K_{k+1}^-} \rho_{\ell}
\big( 1 + \E[\tilde{H}_{\ell}] \big)
+
\rho_k^+ 
\sum_{\ell \in \K_{k+1}^-} \frac{\rho_{\ell}}{1-\rho_k^+}
\E[\tilde{H}_{\ell}],
\quad
k \in \K_{K-1}^+.
\label{eq:E[U_k^dag]-aux}
\end{equation}
It then follows from (\ref{eq:E[U_k^dag]-NP}) and
(\ref{eq:E[U_k^dag]-aux}) that
\[
\sum_{\ell \in K_k^+} \E[U_{\ell}^{\NP}]
=
\sum_{\ell \in K_k^+} \E[U_{\ell}^{\PR}]
+
\frac{\rho_k^+}{1-\rho_k^+} 
\sum_{\ell \in \K_{k+1}^-} \rho_{\ell} \E[\tilde{H}_{\ell}],
\]
and therefore, 
\begin{align*}
\E[U_k^{\NP}]
&=
\sum_{\ell \in K_k^+} \E[U_{\ell}^{\NP}]
-
\sum_{\ell \in K_{k-1}^+} \E[U_{\ell}^{\NP}]
\\
&=
\E[U_k^{\PR}] 
+
\bigg(
\frac{\rho_k^+}{1-\rho_k^+} 
-
\frac{\rho_{k-1}^+}{1-\rho_{k-1}^+} 
\bigg)
\sum_{\ell \in \K_{k+1}^-} \rho_{\ell} \E[\tilde{H}_{\ell}]
- 
\frac{\rho_{k-1}^+}{1-\rho_{k-1}^+} \cdot \rho_k \E[\tilde{H}_k],
\end{align*}
from which the lemma follows.
\end{proof}

\begin{theorem}\label{theorem:W_k^NP}
The mean waiting time $\E\big[W_k^{\NP}\big]$ ($k \in \K$) of class
$k$ customers in the stationary nonpreemptive priority queue 
satisfying Assumptions~\ref{assumption-1} to \ref{assumption-3}
is given by
\[
\E\big[W_k^{\NP}\big]
=
\E \big[W_k^{\PR}\big]
+
\frac{\ds\sum_{\ell \in \K_{k+1}^-}
\rho_{\ell} \E\big[\tilde{H}_{\ell}\big]}
{(1-\rho_k^+)(1-\rho_{k-1}^+)},
\quad
k \in \K,
\]
where $\rho_0^+=0$ and $\K_{K+1}^- = \emptyset$.
\end{theorem}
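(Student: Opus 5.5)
The plan is to express $\E[U_k^{\NP}]$ in terms of $\E[W_k^{\NP}]$ by Little's law, in the same way (\ref{work-conserving-setup}) and (\ref{eq:U_k-W_k-PR}) were obtained. Lemma~\ref{lemma:E[U_k^NP]} and (\ref{eq:U_k-W_k-PR}) then convert the difference $\E[U_k^{\NP}]-\E[U_k^{\PR}]$ into the difference of mean waiting times. The case $k=K$ is already (\ref{eq:W_K^NP=W_K^PR}), since the added sum is empty there. So I fix $k\in\K_{K-1}^+$.

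First, since service is nonpreemptive, a class $k$ customer is either waiting or in service. Little's law gives $\E[Q_k^{\NP}]=\lambda_k\E[W_k^{\NP}]$ for the mean number waiting. A class $k$ customer is in service with probability $\rho_k$, and its remaining work, including the unit currently being served, is $1+\tilde{H}_k$ by the inspection paradox. Hence
\[
\E[U_k^{\NP}] = \rho_k\E[W_k^{\NP}] + \rho_k\big(1+\E[\tilde{H}_k]\big).
\]

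Second, I subtract (\ref{eq:U_k-W_k-PR}) from this identity. For $k=1$, with $\rho_0^+=0$, I use the identity from the proof of Theorem~\ref{theorem-conservation} with $K=1$, which has the same form. This gives
\[
\rho_k\big(\E[W_k^{\NP}]-\E[W_k^{\PR}]\big)
= \E[U_k^{\NP}]-\E[U_k^{\PR}] + \rho_k\E[\tilde{H}_k]\Big(\frac{1}{1-\rho_{k-1}^+}-1\Big).
\]
The last term equals $\rho_k\E[\tilde{H}_k]\,\rho_{k-1}^+/(1-\rho_{k-1}^+)$. This cancels exactly the term $-\frac{\rho_{k-1}^+}{1-\rho_{k-1}^+}\rho_k\E[\tilde{H}_k]$ in Lemma~\ref{lemma:E[U_k^NP]}. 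What remains is $\rho_k\sum_{\ell\in\K_{k+1}^-}\rho_\ell\E[\tilde{H}_\ell]/((1-\rho_{k-1}^+)(1-\rho_k^+))$. Dividing by $\rho_k$ yields the theorem.

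The substantive work has already been done in Lemma~\ref{lemma:E[U_k^NP]}. The only point needing care is the bookkeeping of the $\tilde{H}_k$ terms: in the nonpreemptive case the residual factor is $1+\E[\tilde{H}_k]$, while in the preemptive case it is $1+\E[\tilde{H}_k]/(1-\rho_{k-1}^+)$. I also need to confirm that the $k=1$ case is consistent with $\rho_0^+=0$.
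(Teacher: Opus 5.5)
Your proposal is correct and follows the paper's proof essentially step for step: you settle the case $k=K$ by (\ref{eq:W_K^NP=W_K^PR}), and for $k\in\K_{K-1}^+$ you combine the Little's-law identity $\E[U_k^{\NP}]=\rho_k\E[W_k^{\NP}]+\rho_k(1+\E[\tilde{H}_k])$ with (\ref{eq:U_k-W_k-PR}) and Lemma~\ref{lemma:E[U_k^NP]}, with the $\rho_{k-1}^+\rho_k\E[\tilde{H}_k]/(1-\rho_{k-1}^+)$ terms cancelling exactly as you describe. You also treat $k=1$ explicitly: (\ref{eq:U_k-W_k-PR}) is stated only for $k\in\K_2^-$, and you note that the single-class identity with $\rho_0^+=0$ has the same form, which fills a small step the paper leaves implicit.
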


\begin{remark}
It is interesting to observe that the difference between
$\E\big[W_k^{\NP}\big]$ and $\E \big[W_k^{\PR}\big]$ depends only on
the traffic intensity $\rho_j$ ($j \in \K$) and the mean remaining
service time $\E\big[\tilde{H}_{\ell}\big]$ ($\ell \in \K_{k+1}^-$)
and it is independent of the detailed structure of the arrival
processes.
\end{remark}

\begin{proof}
The theorem holds for $k=K$ because (\ref{eq:W_K^NP=W_K^PR}) and 
$\K_{K+1}^- = \emptyset$. Let $Q_k^{\NP}$ ($k \in \K_{K-1}^+$) 
denote the number of waiting customers in
class $k$ in the stationary nonpreemptive priority queue.  We then
have
\begin{align}
\E[U_k^{\NP}] 
&=
\E[H_k] \E\big[Q_k^{\NP}\big] + \rho_k \big(1 + \E\big[\tilde{H}_k\big]\big)
\nonumber
\\
&= 
\rho_k \E\big[W_k^{\NP}\big]
+ 
\rho_k \big(1 + \E\big[\tilde{H}_k\big]\big),
\quad
k \in \K_{K-1}^+,
\label{eq:U_k-W_k-NP}
\end{align}
where we use the Little's law in the second equality.  Therefore, it
follows from (\ref{eq:U_k-W_k-PR}), Lemma \ref{lemma:E[U_k^NP]}, and
(\ref{eq:U_k-W_k-NP}) that for $k \in \K_{K-1}^+$,
\begin{align*}
\lefteqn{%
\rho_k \E\big[W_k^{\NP}\big]
+ 
\rho_k \big(1 + \E\big[\tilde{H}_k\big]\big)
}\qquad\qquad
\\
& =
\rho_k \E\left[W_k^{\PR}\right]
+
\rho_k \bigg( 1 + \frac{\E[\tilde{H}_k]}{1-\rho_{k-1}^+} \bigg)
- 
\frac{\rho_{k-1}^+}{1-\rho_{k-1}^+} \cdot \rho_k \E[\tilde{H}_k]
+
\frac{\rho_k \ds\sum_{\ell \in \K_{k+1}^-} \rho_{\ell} \E[\tilde{H}_{\ell}]}
{(1-\rho_{k-1}^+)(1-\rho_k^+)}
\\
&=
\rho_k \E\left[W_k^{\PR}\right]
+
\rho_k \big( 1 + \E[\tilde{H}_k] \big)
+
\frac{\rho_k \ds\sum_{\ell \in \K_{k+1}^-} \rho_{\ell} \E[\tilde{H}_{\ell}]}
{(1-\rho_{k-1}^+)(1-\rho_k^+)},
\end{align*}
from which the theorem for $k \in \K_{K-1}^+$ follows.
\end{proof}

\section{Special cases}\label{section:special}

The queueing model in this paper is capable of representing various
arrival processes by setting $p_k$, $\bm{\alpha}_k(z)$, $\bm{T}_k(z)$
($k \in \K$) in (\ref{A_k(z)}) appropriately.  In this section, we
provide the results in three important special cases: the queue with
constant service times of one slot, the queue with i.i.d.\ arrivals,
and the queue with i.i.d.\ arrivals during active periods.

\subsection{Queues with constant service times of one slot}

Suppose $H_k=1$ ($k \in K$) with probability one. In this case, 
any service discipline is nonpreemptive and 
\[
\E[H_k] = 1, \qquad \E[\tilde{H}_k] = 0,
\quad
k \in \K.
\]
The conservation law is then given by
\begin{align*}
\sum_{k \in \K} \rho_k \E[W_k] 
&=
\frac{1}{2(1-\rho)} \sum_{k \in \K} \rho_k(\rho -\rho_k)  
+ \frac{1}{1-\rho} \sum_{k \in \K} 
\rho_k  \left( 
\E[\tilde{\Lambda}_k] 
- 
\pi_k^{[\on]} \frac{\E[C_k \Lambda_k ]}{\E[C_k]}
\right)
\nonumber
\\
& 
\qquad {} + 
\sum_{k \in \K} \pi_k^{[\on]} \rho_k 
\left( 1 + \frac{\rho_k}{1-\rho} \right)
\left(1 + \E[\tilde{C}_k] \right)
-
\sum_{k \in \K} \pi_k^{[\on]} \E[\Lambda_k(\tilde{C}_k)].
\end{align*}
Furthermore, the mean waiting times $\E[W_k^{\PR}]$ and $\E[W_k^{\NP}]$
($k \in \K$) in the preemptive-resume and nonpreemptive priority queues
are identical and given by
\begin{align*}
\E\left[W_k^{\PR}\right] 
=
\E\left[W_k^{\NP}\right] 
&=
\frac{\ds\sum_{\ell \in \K_k^+} 
\rho_{\ell} 
\left[ 
\pi_{\ell}^{({\on})} \rho_{\ell} \left( 1+\E[\tilde{C}_{\ell}] \right)
+
\E[\tilde{\Lambda}_{\ell}] - \pi_{\ell}^{({\on})} 
\frac{\E[C_{\ell} \Lambda_{\ell} ]}{\E[C_{\ell}]}
\right]
}
{(1-\rho_k^+)(1-\rho_{k-1}^+)} 
\\
& \qquad {} 
+ \frac{\ds\sum_{\ell \in \K_k^+} \rho_{\ell} (\rho_k^+ - \rho_{\ell})}
{2(1-\rho_k^+)(1-\rho_{k-1}^+)} 
+ 
\frac{\rho_{k-1}^+ +
\E[\tilde{\Lambda}_k] - \pi_k^{({\on})}
\ds\frac{\E[C_k \Lambda_k ]}{\E[C_k]}}
{1-\rho_k^+}
\\
& \qquad\qquad {} 
+
\pi_k^{({\on})}
\left[
\left( 1 + \frac{\rho_k}{1-\rho_{k-1}^+} \right)
\left( 1+\E[\tilde{C}_k] \right) 
-
\frac{\E[\Lambda_k(\tilde{C}_k)]}{\lambda_k}
\right] .
\end{align*}

\subsection{Queues with i.i.d.\ arrivals}

Suppose $\{A_{k,n}\}_{n=1,2,\ldots}$ ($k \in \K$) is a sequence of
i.i.d.\ random variables with probability mass function $a_k(m)
=\Pr(A_{k,n}=m)$ ($m=0,1,\ldots$), i.e.,
\[
\bm{A}_k(z) = \left(\begin{array}{cc}
a_k(0) & E[z^{A_k}] -a_k(0)\\
a_k(0) & E[z^{A_k}] -a_k(0)
\end{array}\right),
\qquad 
k \in \K,
\]
where $A_k$ ($k \in \K$) denotes a generic random variable for
$A_{k,n}$. Note that $\lambda_k = E[A_k]$, $\rho_k = \lambda_k
E[H_k]$, and $\pi_k^{[\on]} = 1-a_k(0)$. Furthermore, it is easy to
see that
\begin{align*}
E[\tilde{\Lambda}_k] 
= 
E[\tilde{A}_k] + \frac{\lambda_k}{a_k(0)},
&\qquad
E[\Lambda_k(\tilde{C}_k)]
= 
\frac{\lambda_k}{a_k(0)},
\\
E[\tilde{C}_k]
=
\frac{1-a_k(0)}{a_k(0)},
&\qquad
\frac{E[C_k \Lambda_k]}{E[C_k]} 
= 
\frac{\lambda_k (2-a_k(0))}{a_k(0) (1-a_k(0))} .
\end{align*}
The conservation law for nonpreemptive services is then given by
\[
\sum_{k \in \K} \rho_k \E[W_k] 
=
\frac{\rho}{1-\rho} \sum_{k \in \K} \rho_k \E[\tilde{H}_k] 
+ \frac{1}{2(1-\rho)} \sum_{k \in \K} \rho_k(\rho -\rho_k)  
+ \frac{1}{1-\rho} \sum_{k \in \K} 
\rho_k \E[H_k] \E[\tilde{A}_k],
\]
and the mean waiting time $E[W_k^{\PR}]$ 
($k \in \K$) in the preemptive-resume priority queue is given by
\[
\E\left[W_k^{\PR}\right] 
=
\frac{\ds\sum_{\ell \in \K_k^+} \rho_{\ell} \E[\tilde{H}_{\ell}]}
{(1-\rho_k^+)(1-\rho_{k-1}^+)}
+ 
\frac{\ds\sum_{\ell \in \K_k^+} \rho_{\ell} 
\left(
\rho_k^+ - \rho_{\ell} + 2\E[H_{\ell}] \E[\tilde{A}_{\ell}] 
\right)}
{2(1-\rho_k^+)(1-\rho_{k-1}^+)}
+
\frac{\rho_{k-1}^+ + \E[H_k] \E[\tilde{A}_k]}{1-\rho_{k-1}^+} .
\]
For $\E[W_k^{\NP}]$, see Theorem \ref{theorem:W_k^NP}.
These results agree with the well-known results
(cf. Section 6.6 in \cite{Taka93}).

\subsection{Queues with i.i.d.\ arrivals during active periods}

Suppose the number of class $k$ customers arriving at each slot during
active periods is independent of the state of the underlying Markov
chain.  Let $A_k^+$ ($k \in \K$) denote the conditional random
variable of $A_k$ given $A_k \geq 1$. We then have
\[
\bm{A}_k(z) = \left(\begin{array}{cc}
p_k & (1-p_k) a_k^+(z) \bm{\alpha}_k \\
(\bm{I}_k-\bm{T}_k) \bm{e}_k & a_k^+(z) \bm{T}_k
\end{array}\right),
\qquad 
k \in \K,
\]
where $a_k^+(z)= E[z^{A_k^+}] = E[z^{A_k} \mid A_k \geq 1]$. Note that
\begin{align*}
\pi_k^{[\on]} = \frac{\lambda_k}{E[A_k^+]} ,
&\qquad
E[\tilde{\Lambda}_k] = E[\tilde{A}_k^+] + E[A_k^+] E[\tilde{C}_k],
\\
E[\Lambda_k(\tilde{C}_k)] = E[A_k^+] E[\tilde{C}_k],
&\qquad
\frac{E[C_k \Lambda_k]}{E[C_k]} = 2 E[A_k^+] E[\tilde{C}_k] + E[A_k^+] .
\end{align*}
The conservation law for nonpreemptive services is then given by
\begin{align*}
\sum_{k \in \K} \rho_k \E[W_k] 
&=
\frac{\rho \ds\sum_{k \in \K} \rho_k \E[\tilde{H}_k]}{1-\rho} 
+ 
\frac{\ds\sum_{k \in \K} 
\rho_k \left[2 \E[H_k] \left( \E[\tilde{A}_k^+] - \E[A_k^+] \right)
+ 2-\rho+\rho_k \right]}{2(1-\rho)} 
\\
& \qquad {} 
+ 
\frac{\ds\sum_{k \in \K} 
\rho_k \left( \E[H_k] \E[A_k^+] - 1 + \rho -\rho_k\right)
\left(1 - \frac{\lambda_k}{\E[A_k^+]}\right) 
\left(1+\E[\tilde{C}_k] \right)}{1-\rho} ,
\end{align*}
and the mean waiting time $E[W_k^{\PR}]$ 
($k \in \K$) in the preemptive-resume priority queue is given by
\begin{align*}
\E\left[W_k^{\PR}\right] 
&=
\frac{\ds\sum_{\ell \in \K_k^+} \rho_{\ell} \E[\tilde{H}_{\ell}]}
{(1-\rho_k^+)(1-\rho_{k-1}^+)} 
+ 
\frac{\ds\sum_{\ell \in \K_k^+} 
\rho_{\ell} \left[2 \E[H_{\ell}] 
\left( \E[\tilde{A}_{\ell}^+] - \E[A_{\ell}^+] \right)
+ \rho_k^+ + \rho_{\ell} \right]}{2(1-\rho_k^+)(1-\rho_{k-1}^+)} 
\\
& \quad {} 
+ 
\frac{\ds\sum_{\ell \in \K_k^+} 
\rho_{\ell} \left(\E[H_{\ell}] \E[A_{\ell}^+] - \rho_{\ell} \right)
\left(1 - \frac{\lambda_{\ell}}{\E[A_{\ell}^+]}\right)
\left(1+\E[\tilde{C}_{\ell}] \right)}{(1-\rho_k^+)(1-\rho_{k-1}^+)} 
\\
& \qquad {} 
+ 
\frac{\E[H_k] \left( \E[\tilde{A}_k^+] - \E[A_k^+] \right)
+ 1 + \rho_k}{1-\rho_{k-1}^+}
\\
& \qquad\quad {} 
+ 
\frac{\left(
\E[H_k]\E[A_k^+] - 1 +\rho_{k-1}^+ - \rho_k
\right)
\left(1 - \ds\frac{\lambda_k}{\E[A_k^+]}\right)
\left(1+\E[\tilde{C}_k] \right)}
{1-\rho_{k-1}^+} .
\end{align*}
For $\E[W_k^{\NP}]$, see Theorem \ref{theorem:W_k^NP}.

\end{document}